\theoremstyle{plain}
\newtheorem{thm}{Theorem}[section]
\newtheorem{lem}[thm]{Lemma}
\newtheorem{prop}[thm]{Proposition}
\newtheorem{cor}[thm]{Corollary}
\theoremstyle{definition}
\newtheorem{defn}[thm]{Definition}
\theoremstyle{remark}
\newtheorem*{rem}{Remark}
\numberwithin{equation}{section}
\begin{document}
\date{\today}

\title
[Global Kato type smoothing estimates via local ones]{Global Kato type smoothing estimates via local ones for dispersive equations}

\author[J. Lee]{Jungjin Lee}

\address{Department of Mathematical Sciences, School of Natural Science, Ulsan National Institute of Science and Technology, UNIST-gil 50, Ulsan 44919, Republic of Korea}
\email{jungjinlee@unist.ac.kr}

\subjclass[2010]{35B65, 42B10, 42B15, 42B37}

\keywords{Kato smoothing, maximal Schr\"odinger}


\thanks{
The author was supported in parts by NRF grant No. 2017R1D1A1B03036053 (Republic of Korea).}

\begin{abstract}
In this paper we show that the local Kato type smoothing estimates are essentially equivalent to the global Kato type smoothing estimates for some class of dispersive equations including the Schr\"odinger equation. From this we immediately have two results as follows. 
One is that the known local Kato smoothing estimates are sharp. The sharp regularity ranges of the global Kato smoothing estimates are already known, but those of the local Kato smoothing estimates are not. Sun, Tr\'elat, Zhang and Zhong \cite{sun2017sharpness} have shown it only in spacetime $\mathbb R \times \mathbb R$. Our result resolves this issue in higher dimensions. The other one is the sharp global-in-time maximal Schr\"odinger estimates. 
Recently, the pointwise convergence conjecture of the Schr\"odinger equation has been settled by Du--Guth--Li--Zhang \cite{du2018pointwise} and Du--Zhang \cite{du2018sharp}. For this they proved related sharp local-in-time maximal Schr\"odinger estimates. By our result, these lead to the sharp global-in-time maximal Schr\"odinger estimates.
\end{abstract}

\maketitle

\section{Introduction}

Fix $n \ge 1$ and $m > 1$. Let $\Phi \in C^\infty(\mathbb R^n \setminus 0)$ be a real-valued function satisfying the following conditions:
\begin{equation} \label{homog_cond}
\left\{
\begin{aligned}
&|\nabla \Phi(\xi)| \neq 0 \quad \text{ for all $\xi \neq 0$}, \\
&\Phi(\lambda \xi) = \lambda^{m} \Phi(\xi) \quad \text{ for all $\lambda >0$ and $\xi \neq 0$}.
\end{aligned}
\right.
\end{equation}
We are concerned with the solutions $u$ to equations 
\begin{equation} \label{gen_form} 
\left\{ 
\begin{aligned}
i \partial_t u(t,x) +  \Phi(D)u(t,x) &=0 \quad \text{ in } \mathbb R \times \mathbb R^n,\\
u(0,x) &= f(x) \quad \text{ in } \mathbb R^n,
\end{aligned}
\right.
\end{equation}
where  
$\Phi(D)$ is the corresponding Fourier multiplier to the function $\Phi$, that is, $\Phi(D)$ is defined by $\Phi(D) f = (\Phi \hat f)^{\vee}$.

For a function $f$ on $X \times Y$, we define the mixed norm $\| f \|_{L_x^q(X;L_y^r(Y))}$ by
\[
\| f \|_{L_x^q(X;L_y^r(Y))} := \bigg( \int_{X} \Big( \int_{Y} |f(x,y)|^r dy\Big)^{q/r} dx \bigg)^{1/q}.
\]
Let $\mathbb B^n$ be the unit ball in $\mathbb R^n$ and $\mathbb I=\{t\in \mathbb R : 1/2 \le t \le 2 \}$ be an interval.
For $1 \le  q, r \le \infty$ and $\alpha \in \mathbb R$ we use the notations $K_{loc}(L_x^q L_t^r; \alpha)$ and $K_{loc}(L_t^r L_x^q ; \alpha)$ to denote the \textit{local-in-time} smoothing estimates
\begin{align*}
\| \langle D \rangle^{\alpha} u \|_{L_x^q(\mathbb B^n; L_t^r(\mathbb I))} \le C_{\alpha,m}\|f\|_{L^{2}(\mathbb R^n)} 
\intertext{and} 
\| \langle D \rangle^{\alpha} u \|_{L_t^r(\mathbb I; L_x^q(\mathbb B^n))} \le C_{\alpha,m}\|f\|_{L^{2}(\mathbb R^n)}
\end{align*} 
for all $f \in L^2(\mathbb R^n)$ respectively, where  $\langle D \rangle$ is the operator associated with a symbol $\langle \xi \rangle := (1+|\xi|^2)^{1/2}$. Similarly we use $K_{glb}(L_x^q L_t^r;\alpha)$ and $K_{glb}(L_t^r L_x^q ;\alpha)$ to denote the \textit{global-in-time} smoothing estimates
\begin{align*} 
\| \langle D \rangle^{\alpha} u \|_{L_x^q(\mathbb B^n; L_t^r(\mathbb R))} \le C_{\alpha,m}\|f\|_{L^{2}(\mathbb R^n)}
\intertext{and} 
\| \langle D \rangle^{\alpha} u \|_{L_t^r(\mathbb R; L_x^q(\mathbb B^n))} \le C_{\alpha,m}\|f\|_{L^{2}(\mathbb R^n)}
\end{align*} 
for all $f \in L^2(\mathbb R^n)$ respectively. For convenience we denote by $K_{loc}(L^q_{t,x};\alpha) := K_{loc}(L^q_{t}L^q_{x};\alpha)$ and $K_{glb}(L^q_{t,x};\alpha) := K_{glb}(L^q_{t}L^q_{x};\alpha)$.

In this paper we show that the local-in-time smoothing estimates $K_{loc}(L_x^q L_t^r; \alpha)$,and $K_{loc}(L_t^rL_x^q ; \alpha)$ are essentially equivalent to the global-in-time  smoothing estimates $K_{glb}(L_x^q L_t^r; \alpha)$ and $K_{glb}(L_t^r L_x^q ; \alpha)$, respectively. 

\begin{thm} \label{thm:LocToGlb}
Let $2 \le q,r < \infty$. Suppose that $\Phi$ satisfies the condition \eqref{homog_cond}. 
\begin{itemize}
\item[(i)]
The local-in-time smoothing estimate $K_{loc}(L_x^qL_t^r;\alpha)$ implies
the global-in-time smoothing estimate $K_{glb}(L_x^{q}L_t^{\tilde r}; \alpha - \delta)$ for all $\tilde r >r$ and $\delta > n(\frac{1}{r}- \frac{1}{\tilde r})$. 
\item[(ii)]
The local-in-time smoothing estimate $K_{loc}(L_t^rL_x^q;\alpha)$ implies
the global-in-time smoothing estimate $K_{glb}(L_t^{\tilde r}L_x^{q};\alpha - \delta)$ for all $\tilde r >r$ and $\delta > n(\frac{1}{r}- \frac{1}{\tilde r})$. 

\end{itemize}
\end{thm}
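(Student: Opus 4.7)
\noindent
The plan is to combine a Littlewood--Paley decomposition in spatial frequency with the scaling symmetry afforded by $\Phi(\lambda\xi) = \lambda^m\Phi(\xi)$, and then to assemble the rescaled local bounds into a global one via an interpolation against a trivial pointwise bound. I describe part (i); part (ii) is completely analogous, with the roles of $L_x^q$ and $L_t^r$ interchanged (in fact on the unit ball, Minkowski relates the two orderings up to a constant).

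Decompose $f=\sum_{k\ge 0}P_k f$ with $P_k$ projecting onto the annulus $|\xi|\sim 2^k$, and set $u_k=e^{it\Phi(D)}P_k f$ (the low-frequency part is handled separately via a trivial Sobolev bound). By the homogeneity, the rescaled solution $v_k(t,x):=u_k(2^{-km}t,2^{-k}x)$ solves the same equation with initial data at frequency $\sim 1$ and $L^2$-norm $2^{kn/2}\|P_k f\|_{L^2}$. Applying the hypothesis $K_{loc}(L_x^qL_t^r;\alpha)$ to $v_k$, unwinding the rescaling, and using translation invariance in $(x_0,t_0)$ gives the uniform bound
\[
\|\langle D\rangle^\alpha u_k\|_{L_x^q(x_0+2^{-k}\mathbb{B}^n;\,L_t^r(t_0+2^{-km}\mathbb{I}))}\le C\cdot 2^{k(\alpha+n/2-n/q-m/r)}\|P_k f\|_{L^2}.
\]
In addition I have the trivial pointwise bound $\|u_k\|_{L^\infty_{t,x}(\mathbb R\times\mathbb R^n)}\le C\cdot 2^{kn/2}\|P_k f\|_{L^2}$, coming from the spatial Bernstein inequality combined with $L^2$-conservation.

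To globalize in time, I would cover $\mathbb{B}^n$ by $\sim 2^{kn}$ spatial translates of radius $2^{-k}$ and $\mathbb R$ by infinitely many time-translates of $2^{-km}\mathbb{I}$. The spatial summation is a straightforward $\ell^q$-aggregation over disjoint small balls. For the temporal summation, a direct $\ell^r$-summation of the local bounds diverges, since the local estimate does not decay in the time-translate. Instead, on each time-translate I interpolate the rescaled local $L_t^r$-bound against the trivial $L_t^\infty$-bound via H\"older's inequality; summing the resulting $\ell^{\tilde r}$-series and balancing the spatial Bernstein factor $2^{kn/2}$ against the derivative loss $2^{-k\delta}$ coming from $\langle D\rangle^{\alpha-\delta}$ yields convergence precisely when $\delta>n(1/r-1/\tilde r)$. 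Finally I sum the frequency-localized bounds in $k$ using the near-orthogonality of the Littlewood--Paley pieces to obtain $K_{glb}(L_x^qL_t^{\tilde r};\alpha-\delta)$.

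The principal obstacle is the third step: the delicate bookkeeping that forces the threshold $\delta>n(1/r-1/\tilde r)$. One must match the rescaling exponent $2^{k(n/2-n/q-m/r)}$, the spatial-translate count $2^{kn}$, the trivial-bound factor $2^{kn/2}$, and the H\"older interpolation parameter $1-r/\tilde r$ so that all resulting powers of $2^k$ are absorbed by $2^{-k\delta}$, simultaneously guaranteeing the absolute convergence of the $\ell^{\tilde r}$-sum over time-translates. The spatial dimension $n$ (rather than the symbol order $m$) appears in the sharp threshold because the compensating factor in the interpolation originates from the spatial Bernstein inequality used for the trivial $L^\infty_{t,x}$ bound, rather than from a time-Fourier (Bernstein-in-time) argument which would produce a loss of $m(1/r-1/\tilde r)$ instead.
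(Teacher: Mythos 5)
Your reduction via Littlewood--Paley decomposition and parabolic rescaling matches the paper's Lemma 2.1, and you have correctly identified the temporal summation as the heart of the matter. But the mechanism you propose for that step does not work. Fix $k$ and cover $\mathbb R$ by the translates $I_j=t_j+2^{-km}\mathbb I$. H\"older gives
\[
\|u_k\|_{L^{\tilde r}_t(I_j)}\le \|u_k\|_{L^{r}_t(I_j)}^{r/\tilde r}\,\|u_k\|_{L^{\infty}_t(I_j)}^{1-r/\tilde r},
\]
and both factors on the right are bounded \emph{uniformly in $j$}: the first by the translated local estimate, the second by Bernstein plus $L^2$-conservation, neither of which decays as $|t_j|\to\infty$. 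So your chain of estimates bounds $\sum_j\|u_k\|_{L^{\tilde r}_t(I_j)}^{\tilde r}$ by an infinite sum of identical constants, i.e.\ by $+\infty$. The only alternative reading, keeping $\sum_j\|u_k\|_{L^{r}_t(I_j)}^{r}=\|u_k\|_{L^{r}_t(\mathbb R)}^{r}$ intact, reduces the global $L^{\tilde r}_t$ bound to the global $L^{r}_t$ bound, which is exactly what is not available --- circular. The factor $2^{-k\delta}$ from $\langle D\rangle^{\alpha-\delta}$ is irrelevant to this difficulty: it helps sum over dyadic frequencies $k$, not over time translates $j$ at fixed $k$. Note also that for general $L^2$ data the local norms genuinely do not decay in $j$ (mass can keep arriving at the unit ball at arbitrarily late times), so no refinement of the trivial $L^\infty$ bound alone can rescue the argument.

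What is missing is a dispersive input producing almost-orthogonality in $j$, and this is precisely what the paper supplies in two stages. First (Proposition 3.2), a wave-packet decomposition at scale $H$ together with induction on scales: since $|\nabla\Phi|\neq 0$ on the unit annulus, each packet travels along a tube transverse to the $t$-axis and hence meets only $O(1)$ of the slabs $I_j\times B_R$ inside the fixed spatial ball; combined with $L^2$-orthogonality of the packets this extends the estimate from $I_{R^m}$ to an arbitrary interval $I_H$ with only an $H^\epsilon$ loss. Second (Proposition 3.3), that $H^\epsilon$ loss is removed by Tao's epsilon-removal argument, dualizing to a restriction estimate and using the decay of $\widehat{d\sigma}$ together with sparse collections of balls. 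It is this second step --- not Bernstein bookkeeping --- that forces $\tilde r>r$ and produces the threshold $\delta>n(\tfrac1r-\tfrac1{\tilde r})$; the exponent $n$ enters because the spatial projection of the relevant support sets lies in $B_R$, whose volume is $\sim R^n$. Without substitutes for these two ingredients your outline cannot close.
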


The most basic estimate for the solution $u$ to \eqref{gen_form} is the energy identity that for any $t \in \mathbb R$,
\begin{equation*} 
\|u(t)\|_{L^2(\mathbb R^n)} = C\|f\|_{L^2(\mathbb R^n)}
\end{equation*}
for all $f \in L^2(\mathbb R^n)$,
where $u(t) = u(t,\cdot)$.
It implies that the solution $u$ has as much regularity as the initial $f$ in $L^2$-space. 
Kato \cite{kato1983cauchy} first observed that an integration locally in spacetime makes the solution $u$ smoother than the initial $f$, and showed the local smoothing estimate $K_{loc}(L^2_{t,x};1)$ for the KdV equation. Later, Constantin--Saut \cite{constantin1988local}, Sj\"olin \cite{sjolin1987regularity} and Vega \cite{vega1988schrodinger} independently proved the local smoothing estimates $K_{loc}(L_{t,x}^{2};\alpha)$ for some class of dispersive equations including the Schr\"odinger equation. 

In \cite{vega1988schrodinger}, Vega  proved the global  smoothing estimate $K_{glb}(L_{t,x}^2;\alpha)$, $\alpha < 1/2$ for the Schr\"odinger equation in all dimensions, and the endpoint estimate $K_{glb}(L_{t,x}^2;1/2)$ was obtained by Ben-Artzi and Klainerman \cite{ben1992decay} for $n \ge 3$, Chihara \cite{chihara2002smoothing} for $n=2$ and Kenig--Ponce--Vega \cite{kenig1991oscillatory} for $n=1$. For other dispersive equations, the global smoothing estimates $K_{glb}(L_{t,x}^2;\alpha)$ are similarly obtained. (For details, see e.g. \cite{ruzhansky2012smoothing} and the references therein.) 

Theorem \ref{thm:LocToGlb} not only gives another proof for the global smoothing estimates $K_{glb}(L_{t,x}^q;\alpha)$ for $q>2$ (except the endpoint), but also resolves the sharpness issue of the local smoothing estimates $K_{loc}(L_{t,x}^2;\alpha)$. The sharpness of the known global smoothing estimates $K_{glb}(L_{t,x}^2;\alpha)$ is already settled, but that of the local smoothing estimates $K_{loc}(L_{t,x}^2;\alpha)$ is not. For instance, in the Schr\"odinger equation it is known that if $\alpha > 1/2$, the global smoothing $K_{glb}(L_{t,x}^2;\alpha)$ fails (see, e.g., \cite{ruzhansky2012smoothing}), but the fact that if $\alpha > 1/2$ the local smoothing $K_{loc}(L_{t,x}^2;\alpha)$ also fails was recently shown by Sun, Tr\'elat, Zhang and Zhong \cite{sun2017sharpness} when $n=1$. (In fact, Sun, Tr\'elat, Zhang and Zhong considered some class of dispersive equations.) By Theorem \ref{thm:LocToGlb} we can see that this fact holds in higher dimensions.
Generally we have the following.

\begin{cor}
Assume that $\Phi$ satisfies the condition \eqref{homog_cond}. 
Suppose that for each $r \ge 2$ there is an $\alpha(r) \in \mathbb R$ such that the global-in-time estimate $K_{glb}(L_{t,x}^r;\alpha)$ holds for $\alpha \le \alpha(r)$ but fails for $\alpha > \alpha(r)$. Then, the local-in-time smoothing estimate $K_{loc}(L_{t,x}^r;\alpha)$ also holds for $\alpha \le \alpha(r)$ but fails for $\alpha > \alpha(r)$.
%
%
\end{cor}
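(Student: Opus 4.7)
One half of the corollary is automatic. Because $\mathbb I \subset \mathbb R$, the norm $L_t^r(\mathbb I)$ is dominated by $L_t^r(\mathbb R)$, so $K_{glb}(L_{t,x}^r;\alpha)$ trivially implies $K_{loc}(L_{t,x}^r;\alpha)$ for every $\alpha$. The hypothesis therefore hands us $K_{loc}(L_{t,x}^r;\alpha)$ for all $\alpha \le \alpha(r)$ for free.

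For the harder half, my plan is a contrapositive through Theorem \ref{thm:LocToGlb}: I assume $K_{loc}(L_{t,x}^r;\alpha_0)$ holds for some $\alpha_0 > \alpha(r)$ and aim to produce a global estimate $K_{glb}(L_{t,x}^r;\beta)$ with $\beta > \alpha(r)$, contradicting the sharp failure at $\alpha(r)$ directly.

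The key is to arrange Theorem \ref{thm:LocToGlb}(ii) so that the output time exponent is $r$ itself (rather than some $\tilde r > r$). Since the theorem demands a strict increase of the time exponent, I first trade a tiny bit of time-integrability on the bounded interval $\mathbb I$ via H\"older's inequality: for any $2 \le r' < r$,
\[
K_{loc}(L_t^r L_x^r;\alpha_0) \;\Longrightarrow\; K_{loc}(L_t^{r'} L_x^r;\alpha_0).
\]
Theorem \ref{thm:LocToGlb}(ii) applied with input exponent $r'$ and $\tilde r = r$ then yields
\[
K_{glb}(L_t^r L_x^r;\alpha_0 - \delta) \;=\; K_{glb}(L_{t,x}^r;\alpha_0 - \delta) \quad \text{for every } \delta > n\bigl(\tfrac{1}{r'} - \tfrac{1}{r}\bigr).
\]
Since $n(1/r' - 1/r) \downarrow 0$ as $r' \uparrow r$, I can choose $r'$ close enough to $r$ and then pick $\delta$ inside the open interval $\bigl(n(\tfrac{1}{r'} - \tfrac{1}{r}),\; \alpha_0 - \alpha(r)\bigr)$. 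The resulting $\beta := \alpha_0 - \delta$ lies strictly above $\alpha(r)$, giving the required contradiction.

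The single obstacle I expect is the endpoint $r = 2$: the H\"older reduction forces $r' \ge 2$, so there is no room left in $r' < r$. Handling this endpoint would require either an interpolation argument between the hypothetical $K_{loc}(L_{t,x}^2;\alpha_0)$ and an already-established $K_{loc}(L_{t,x}^s;\alpha(s))$ for some $s > 2$ (relying on continuity or monotone behaviour of $r \mapsto \alpha(r)$ at $r=2$), or a separate direct argument tailored to the $L^2$ endpoint. Away from this endpoint, the entire corollary collapses to one application of Theorem \ref{thm:LocToGlb} preceded by the trivial H\"older step on $\mathbb I$.
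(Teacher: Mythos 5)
Your argument for the trivial inclusion and for every $r>2$ is correct, and it takes a genuinely different route from the paper's. The paper runs Theorem~\ref{thm:LocToGlb} ``upward'': from $K_{loc}(L_{t,x}^r;\alpha)$ it produces $K_{glb}(L_{t,x}^{\tilde r};\tilde\alpha)$ for $\tilde r>r$ and $\tilde\alpha<\alpha-n(\tfrac1r-\tfrac1{\tilde r})$, reads off $\alpha-n(\tfrac1r-\tfrac1{\tilde r})\le\alpha(\tilde r)$ from the hypothesis at the exponent $\tilde r$, and then lets $\tilde r\to r$, invoking the continuity of the concave graph $s\mapsto\alpha(1/s)$ obtained by interpolating the global estimates. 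Your ``H\"older down to $r'<r$ on the bounded interval $\mathbb I$, then apply Theorem~\ref{thm:LocToGlb}(ii) with $\tilde r=r$'' manoeuvre instead lands the contradiction at the exponent $r$ itself; this dispenses with any continuity of $r\mapsto\alpha(r)$, keeps the spatial exponent fixed throughout (part (ii) preserves $q$), and the choice $\delta\in\bigl(n(\tfrac1{r'}-\tfrac1r),\,\alpha_0-\alpha(r)\bigr)$ is legitimate once $r'$ is close enough to $r$. Where it applies, your argument is cleaner.

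The endpoint $r=2$, however, is a genuine gap rather than a loose end: it is precisely the case the corollary is advertised for (sharpness of the classical local Kato smoothing $K_{loc}(L^2_{t,x};\alpha)$, i.e.\ the higher-dimensional extension of Sun--Tr\'elat--Zhang--Zhong), and your scheme has no admissible $r'$ there because Theorem~\ref{thm:LocToGlb} requires both exponents to be at least $2$. The repairs you gesture at are not routine. Interpolating the hypothetical $K_{loc}(L^2_{t,x};\alpha_0)$ against a known $K_{loc}(L^s_{t,x};\alpha(s))$ with $s>2$ and comparing with the already-established cases $r>2$ yields $(1-\theta)\alpha_0+\theta\alpha(s)\le\alpha(r_\theta)$, and passing to $\theta\to0$ to conclude $\alpha_0\le\alpha(2)$ requires $\lim_{r\to2^+}\alpha(r)=\alpha(2)$; concavity of $s\mapsto\alpha(1/s)$ only controls this limit from one side at the endpoint $s=1/2$ of its domain, so the continuity there must be argued, which is exactly the issue the paper's continuity observation is meant to address. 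As written, then, the headline case $r=2$ remains unproven: either supply it by the paper's upward route (Theorem~\ref{thm:LocToGlb}(i) with $\tilde r>2$ together with a justification of the continuity of $\alpha$ at $r=2$), or give a separate endpoint argument.
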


\begin{proof}
It is obvious that $K_{loc}(L_{t,x}^r;\alpha)$ holds for $\alpha \le \alpha(r)$. 
Observe that the graph $\{(s, \alpha(1/s)) : 0 \le s \le 1/2\}$ is continuous (and convex upward) by interpolation.
Assume for contradiction that the local-in-time smoothing estimate $K_{loc}(L_{t,x}^r;\alpha)$ holds for some $\alpha > \alpha(r)$. 
Then from (i) of Theorem \ref{thm:LocToGlb} and H\"older's inequality it follows that $K_{glb}(L_{t,x}^{\tilde r}; \tilde \alpha)$ holds for $\tilde r > r$ and $\tilde \alpha < \alpha - n(\frac{1}{r} - \frac{1}{\tilde r})$. By the supposition one has $\alpha - n(\frac{1}{r} - \frac{1}{\tilde r}) \le \alpha(\tilde r)$, and so $\alpha \le \alpha(r)$ as $\tilde r \to r$. But it contradicts the supposition  $\alpha > \alpha(r)$. Thus $K_{loc}(L_{t,x}^r;\alpha)$ fails for $\alpha>\alpha(r)$. 
\end{proof}

Next we consider the local maximal estimates $K_{loc}(L_x^qL_t^\infty;\alpha)$. It is closely relevant to the pointwise convergence problem of the solution $u(t,x)$ as $t \to 0$. Carleson \cite{carleson1980some} posed the problem to determine the optimal range of $s$ for which the solution $u(t,x)$ to the Schr\"odinger equation converges to the initial $f(x)$ almost everywhere $x \in \mathbb R^n$ whenever $f \in H^{s}(\mathbb R^n)$. When $n=1$, Carleson \cite{carleson1980some} proved the convergence for the sharp range $s \ge 1/4$ through the maximal estimate $K_{loc}(L_x^{1}L_t^\infty;-1/4)$. When $n=2$, Bourgain \cite{bourgain1991some}, Moyua--Vargas--Vega \cite{moyua1996schrodinger}, Tao--Vargas \cite{tao2000bilinearII} and S. Lee \cite{lee2006pointwise} made improvements, and recently Du--Guth--Li \cite{du2017sharp} have obtained the convergence for the sharp range $s> 1/3$ by showing the local maximal $K_{loc}(L_x^{3}L_t^{\infty};\alpha)$ for $\alpha < -1/3$. In higher dimensions, Sj\"olin \cite{sjolin1987regularity}, Vega \cite{vega1988schrodinger}, Bourgain \cite{bourgain2013schrodinger} and Du--Guth--Li--Zhang \cite{du2018pointwise} made progresses, and recently, Du--Zhang  \cite{du2018sharp} have proven the convergence for the sharp range $s>n/2(n+1)$ by showing the local maximal $K_{loc}(L_x^{2}L_t^{\infty};\alpha)$ for $\alpha < -n/2(n+1)$. (For sharpness of the regularity range $s$, see \cites{dahlberg1982note, Bourgain2016, luca2017coherence, luca_rogers_2017}.) (When $n=1$ the maximal estimate $K_{loc}(L_x^{2}L_t^{\infty};\alpha)$ for $\alpha \le -1/4$ was already obtained by Kenig-Ruiz \cite{kenig1983strong}.)  

By Kenig--Ponce--Vega \cite{kenig1991oscillatory}, when $n=1$,  the global-in-time maximal estimate $K_{glb}(L_x^{4}L_t^{\infty};\alpha)$, $\alpha \le -1/4$ was proved for the Schr\"odinger equation, which also implies Carleson's convergence result. As far as we know, the global maximal estimates $K_{glb}(L_x^{q}L_t^{\infty};\alpha)$ have not been addressed in higher dimensions. By Theorem \ref{thm:LocToGlb}, from the local maximal estimates $K_{loc}(L_x^3L_t^\infty;-1/3-\epsilon)$ of Du--Guth--Li \cite{du2017sharp} and $K_{loc}(L_x^2L_t^{\infty};-\frac{n}{2(n+1)}-\epsilon) $ of Du--Zhang  \cite{du2018sharp} we have the following:

\begin{cor} \label{cor:max}
Let $u(t,x)$ be the solution to the free Schr\"odinger equation, i.e., $\Phi(\xi) = |\xi|^2$.
Then,
\begin{itemize}
\item[(i)]
For $n=2$,
$K_{glb}(L^3_xL_t^{\infty};\alpha)$ holds for all $\alpha < -1/3$,
\item[(ii)]
For $n \ge 1$, $K_{glb}(L_x^2L_t^{\infty};\alpha)$ holds for all $\alpha < - \frac{n}{2(n+1)}$.
\end{itemize}
Moreover, the ranges of $\alpha$ in the above statements are sharp except the endpoint.
\end{cor}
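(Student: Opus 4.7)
The plan is to feed the cited local maximal estimates of Du--Guth--Li \cite{du2017sharp} and Du--Zhang \cite{du2018sharp} (together with the $n=1$ case of Kenig--Ruiz \cite{kenig1983strong}) into Theorem \ref{thm:LocToGlb}(i). The obstacle is that Theorem \ref{thm:LocToGlb} requires $r<\infty$, while the targets here have $r=\infty$. I bridge the gap via a one-dimensional Sobolev embedding in time, exploiting the fact that on Schr\"odinger solutions a time derivative is equivalent in the Fourier picture to two spatial derivatives.

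For $q\in\{2,3\}$ according to the case, let $\alpha_0$ be any number strictly below the relevant critical threshold ($-1/3$ or $-n/(2(n+1))$) for which $K_{loc}(L_x^q L_t^\infty;\alpha_0)$ is known to hold. Since $\mathbb I$ is a bounded interval, H\"older in time gives $K_{loc}(L_x^q L_t^r;\alpha_0)$ for every finite $r$, and Theorem \ref{thm:LocToGlb}(i) then yields $K_{glb}(L_x^q L_t^{\tilde r};\alpha_0-\delta)$ for all $\tilde r>r$ and $\delta>n(\tfrac{1}{r}-\tfrac{1}{\tilde r})$.

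To return to the $L_t^\infty$ norm, I apply the one-dimensional fractional Sobolev embedding pointwise in $x$:
\[
\|\langle D\rangle^{\alpha_1} u\|_{L_x^q L_t^\infty(\mathbb R)} \lesssim \|\langle \partial_t\rangle^\beta \langle D\rangle^{\alpha_1} u\|_{L_x^q L_t^{\tilde r}(\mathbb R)}, \qquad \beta>1/\tilde r.
\]
Since $u$ solves the free Schr\"odinger equation, its spacetime Fourier transform is supported on the paraboloid $\tau=|\xi|^2$, so the time multiplier $\langle \tau\rangle^\beta$ becomes $(1+|\xi|^4)^{\beta/2}$, which is comparable to $\langle \xi\rangle^{2\beta}$ uniformly in $\xi$. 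The right-hand side is therefore bounded by a constant times $\|\langle D\rangle^{\alpha_1+2\beta} u\|_{L_x^q L_t^{\tilde r}}$. Setting $\alpha_1+2\beta=\alpha_0-\delta$ and taking, say, $\tilde r=2r$ with $r$ large, the total loss $\delta+2\beta$ can be made arbitrarily small, yielding $K_{glb}(L_x^q L_t^\infty;\alpha)$ for every $\alpha$ strictly below the critical threshold, which proves (i) and (ii).

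Sharpness is automatic: $\mathbb I\subset\mathbb R$ makes the global estimate at least as strong as the local one, so the known sharpness of the local maximal estimates (from the counterexamples to pointwise convergence cited in the paper) transfers immediately to the global ones. The only technical subtlety is the equivalence of $\langle\partial_t\rangle^\beta$ and $\langle D\rangle^{2\beta}$ on solutions, but that is exact up to a harmless bounded factor thanks to $1+|\xi|^4\asymp\langle\xi\rangle^4$, so no delicate low-frequency handling is required.
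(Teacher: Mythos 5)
Your argument is correct and follows essentially the same route as the paper's own proof: Hölder on the bounded time interval to pass from $L_t^\infty$ to a large finite $r$, Theorem \ref{thm:LocToGlb}(i) to globalize with a small loss $\delta>n(\tfrac1r-\tfrac1{\tilde r})$, and then a fractional Sobolev embedding in time converted into $2\beta$ spatial derivatives via the dispersion relation $\tau=|\xi|^2$, with all losses tending to zero. The sharpness observation (global restricted to $\mathbb I$ dominates local, so the local counterexamples transfer) is also the intended one.
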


\begin{proof}
Consider (i).
For any small $\epsilon>0$ we set $r=1/\epsilon$ and $\tilde r=2/\epsilon$. 
By the local maximal estimate $K_{loc}(L_x^3L_t^{\infty};-1/3-\epsilon/2)$ of Du--Guth--Li \cite{du2017sharp} and H\"oler's inequality, one has $K_{loc}(L_x^3L_t^{r};-1/3-\epsilon/2)$. From this and (i) of Theorem \ref{thm:LocToGlb} we have the global-in-time estimate $K_{glb}(L_x^3L_t^{\tilde r};-1/3-2\epsilon)$, which is equivalent to
\begin{equation} \label{almaxi}
\|  \langle D \rangle^{\beta} u \|_{L_x^3(\mathbb B^2; L_t^{\tilde r}(\mathbb R))}
\le C_{\epsilon} \|f\|_{H^{1/3+2\epsilon +\beta}(\mathbb R^2)}, \qquad \forall \beta \in \mathbb R.
\end{equation}

By Sobolev embedding $W^{1/\tilde r,\tilde r}(\mathbb R) \subset L^{\infty}(\mathbb R)$,
\[
\| u \|_{L_x^3(\mathbb B^2; L_t^{\infty}(\mathbb R))}
\le C \|  u \|_{L_x^3(\mathbb B^2; W_t^{1/\tilde r, \tilde r}(\mathbb R))}.
\]
By the property of the Schr\"odinger equation that one time-derivative corresponds to two space-derivatives, the above equation is 
\[
\le C \| u \|_{W_x^{2/\tilde r,3}(\mathbb B^2; L_t^{\tilde r}(\mathbb R))}.
\]
We combine this with \eqref{almaxi} with $\beta=2/\tilde r=2$. Then we obtain
\[
\| u \|_{L_x^3(\mathbb B^2; L_t^{\infty}(\mathbb R))}
\le C_{\epsilon}  \|f\|_{H^{1/3 + 3\epsilon}(\mathbb R^2)},
\]
which implies (i). The (ii) can be shown similarly, so we leave it to the reader.
\end{proof}

Very recently, for some class of dispersive equations including our cases \eqref{homog_cond}  Cho and Ko \cite{cho2018note} obtained the same local maximal estimates with those of Du--Guth--Li and Du-Zhang. Thus Corollary \ref{cor:max} holds when $\Phi$ satisfies \eqref{homog_cond}, but in this case we do not know whether the above range of $\alpha$ is sharp or not except $m=2$.

\smallskip

The paper is organized as follows. In section 2 we first reduce the solution $u$ to a frequency localized operator $U$ by using a standard Littlewood-Paley argument. Next, we define wave-packets and derive some properties of the wave-packet decomposition. In section 3 Theorem \ref{thm:LocToGlb} is proven through two propositions. The first proposition is shown by using the wave packets. The second proposition is just stated. In section 4 we give the proof of  the second proposition in section 3.

\textit{Notation}. Throughout this paper, let $C$ denote various large constants that vary from line to line, which possibly depend on $q$, $r$, $n$ and $m$.
\section{Preliminaries}

\subsection{Reduction to frequency localized operators}

The solution $u$ has a representation of the form
\begin{equation} \label{sol_op}
u(t,x) = e^{it\Phi(D)}f(x) :=  \frac{1}{(2\pi)^n}\int_{\mathbb R^n} e^{ix \cdot \xi} e^{i t \Phi(\xi)} \hat f(\xi) d\xi
\end{equation}
for all Schwartz functions $f$, where the Fourier transform $\hat f$ is defined as 
\[
\hat f(\xi) := \int_{\mathbb R^n} e^{- i y \cdot \xi} f(y) dy.
\] 
We define a frequency localized operator $U$ by
\begin{equation} \label{defU}
U f(t,x) :=\int e^{i (x \cdot \xi + t\Phi(\xi))} \hat f(\xi) \varphi(\xi) d\xi,
\end{equation}
where $\varphi \in C_0^{\infty}$ is a bump function supported on 
\begin{equation} \label{projS} 
\Pi = \{\xi \in \mathbb R^n : 1/2 \le |\xi| \le 2, |\xi/|\xi| - e_1| \le \pi/4\},
\end{equation}
where $e_1 = (1,0,0,\cdots,0) \in \mathbb R^n$ is a standard unit vector.
From a standard Littlewood-Paley argument we have the following lemma:

\begin{lem} \label{lem:LP_rdc}
Assume the second condition of \eqref{homog_cond}. 
Let $I_{R^m} := \{ t \in \mathbb R : R^m/2 \le t \le 2R^m \}$. 
\begin{itemize}
\item[(i)] 
If $K_{loc}(L_x^qL_t^r;\alpha)$ holds,
then for any $R \ge 1$ the estimate
\begin{equation} \label{red_form}
\| U f \|_{L_x^q(B_R; L_t^r(I_{R^m}) )} \le C_\alpha R^{-\alpha+ \frac{n}{q} + \frac{m}{r} - \frac{n}{2}  } \|f\|_{L^{2}(\mathbb R^n)}
\end{equation}
holds for all $f \in L^2(\mathbb R^n)$. Inversely, if the  estimate \eqref{red_form} holds for all $R \ge 1$ and all $f \in L^2(\mathbb R^n)$, then $K_{loc}(L_x^qL_t^r;\alpha-\epsilon)$ holds for all $\epsilon>0$.
 
\item[(ii)]
In the statement $\mathrm{(i)}$, $K_{loc}(L_x^qL_t^r;\alpha)$ and the inequality \eqref{red_form} can be replaced with $K_{glb}(L_x^qL_t^r;\alpha)$ and the inequality
\[
\| U f \|_{L_x^q(B_R; L_t^r(\mathbb R) )} \le C_\alpha R^{-\alpha+ \frac{n}{q} + \frac{m}{r} - \frac{n}{2}  } \|f\|_{L^{2}(\mathbb R^n)},
\]
respectively.

\item[(iii)] 
In the statements $\mathrm{(i)}$ and $\mathrm{(ii)}$, $L_x^qL_t^r$ can be replaced with $L_t^rL_x^q$. \end{itemize}
\end{lem}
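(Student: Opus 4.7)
My plan is to exploit parabolic rescaling at frequency scale $R$. For the forward direction of (i), given $f\in L^2(\mathbb R^n)$ with $\hat f$ supported in $\Pi$, I will set $g(x):=f(Rx)$, so that $\widehat g$ is supported in $R\cdot\Pi\subset\{|\xi|\sim R\}$ with $\|g\|_2=R^{-n/2}\|f\|_2$; a direct Fourier computation then yields the identity $Uf(R^m t,Rx)=(2\pi)^n e^{it\Phi(D)} g(x)$. Applying $K_{loc}(L_x^qL_t^r;\alpha)$ to $g$, replacing $\langle D\rangle^\alpha$ by $R^\alpha$ on the frequency support of $\widehat g$, and changing variables $(s,y)=(R^m t,Rx)$ in the mixed norm (which produces the Jacobian factor $R^{-n/q-m/r}$) reproduces exactly \eqref{red_form} with the exponent $-\alpha+n/q+m/r-n/2$.

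For the converse, I will decompose $f=\sum_{k\ge 0}P_k f$ using a standard Littlewood--Paley partition with $P_k$ localizing to $\{|\xi|\sim 2^k\}$ (and $P_0$ to $\{|\xi|\le 1\}$), and further split each $P_k f$ into $O(1)$ angular parts whose Fourier supports lie in rotated copies of $2^k\cdot\Pi$. Since any rotation of $\Phi$ again satisfies \eqref{homog_cond}, applying \eqref{red_form} with $R=2^k$ to each piece, after undoing the same rescaling, yields
\begin{equation*}
\|\langle D\rangle^{\alpha} e^{it\Phi(D)} P_k f\|_{L_x^q(\mathbb B^n;L_t^r(\mathbb I))} \le C\,\|P_k f\|_2
\end{equation*}
uniformly in $k\ge 0$. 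Combining the triangle inequality, the relation $\langle D\rangle^{-\epsilon}\sim 2^{-k\epsilon}$ on $\supp\widehat{P_k f}$, Cauchy--Schwarz in $k$, and the Plancherel identity $\sum_k\|P_k f\|_2^2\le C\|f\|_2^2$, one gets
\begin{equation*}
\|\langle D\rangle^{\alpha-\epsilon} u\|_{L_x^q(\mathbb B^n;L_t^r(\mathbb I))} \le C\sum_{k\ge 0}2^{-k\epsilon}\|P_k f\|_2 \le C_\epsilon\|f\|_2,
\end{equation*}
which is $K_{loc}(L_x^qL_t^r;\alpha-\epsilon)$.

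Statement (ii) is proven verbatim with $\mathbb I$ and $I_{R^m}$ replaced by $\mathbb R$, and (iii) follows because the same parabolic rescaling produces the same Jacobian factors $R^{-n/q-m/r}$ regardless of whether one integrates in $x$ or in $t$ first. I expect the $\epsilon$-loss in the summation step to be the main obstacle: it is intrinsic to passing from a single-shell estimate to a full $L^2$ estimate by triangle inequality, since one must convert an $\ell^1$ sum of shell norms into the $\ell^2$ sum delivered by Plancherel. The angular decomposition is a secondary bookkeeping point, clean because finitely many rotated caps cover $S^{n-1}$ and all rotated $\Phi$ remain in the class \eqref{homog_cond}.
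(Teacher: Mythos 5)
Your proposal is correct and follows essentially the same route as the paper: the forward direction is the parabolic rescaling $(t,x)\mapsto(R^{m}t,Rx)$ linking $U$ at frequency $\sim R$ to the unit-frequency propagator (the paper phrases it by substituting the rescaled datum $f(x)=R^{n}(\hat g\varphi)^{\vee}(Rx)$ into the equivalent form $\|u\|_{L_x^q(\mathbb B^n;L_t^r(\mathbb I))}\le C\|f\|_{H^{-\alpha}}$, which is the same computation read in the opposite direction), and the converse is the identical Littlewood--Paley plus finite angular decomposition, undoing the rescaling shell by shell, and summing with Cauchy--Schwarz to incur the $\epsilon$-loss. The only cosmetic difference is that the paper keeps $\langle D\rangle^{\alpha}$ on the initial data via \eqref{secG} (so the substitution $\langle D\rangle^{\alpha}\sim R^{\alpha}$ is pure Plancherel), whereas you apply it to $u$ inside the mixed norm; since $\langle D\rangle^{\alpha}$ commutes with $e^{it\Phi(D)}$ and each piece is frequency-localized, this is equivalent.
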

\begin{proof}
The proofs of (i), (ii) and (iii) are almost identical. So, we will give the proof of (i) only.
By commuting $\langle D \rangle^\alpha e^{it\Phi(D)}  = e^{it\Phi(D)} \langle D \rangle^\alpha$, we see that the estimate $K_{loc}(L_x^qL_t^r; \alpha)$ is equivalent to the estimate
\begin{equation} \label{secG}
\| u \|_{L_x^q(\mathbb B^n; L_t^r(\mathbb I))} \le C_\alpha \|f\|_{H^{-\alpha}(\mathbb R^n)}.
\end{equation}
To show \eqref{red_form}, we take an initial data $f(x) = R^{n}(\hat g \varphi)^{\vee}(Rx)$ in the above inequality. Then after rescaling $x \mapsto R^{-1}x$ and $t \mapsto R^{-m}t$, we can obtain \eqref{red_form}.

To show that the inequality \eqref{red_form} implies $K_{loc}(L_x^qL_t^r;\alpha)$, let us introduce some necessary things.
Let $\psi_0$ and $\psi_k$ be smooth functions supported in $\{\xi \in \mathbb R^n:|\xi| \le 2\}$ and $\{\xi \in \mathbb R^n:2^{k-1} \le |\xi| \le 2^{k+1} \}$ respectively such that $1= \psi_0 + \sum_{k=1}^{\infty} \psi_k$.
We define multipliers $S_k$ by $\widehat{S_k f} = \psi_k \hat f$ for $k=0,1,2,\cdots$, and let $\tilde S_k$ be multipliers given by a bump function adapted to $\{2^{k-1} \le |\xi| \le 2^{k+1} \}$ such that $S_k= \tilde S_k S_k$.
If we define $T_kf$ and $f_k$ as $T_k f(x,t):= e^{it\Phi(D)} \tilde S_k f(x)$ and $f_k := S_k f$ respectively, one has
\[
e^{it\Phi(D)}f(x) = \sum_{k=0}^{\infty} T_kf_k.
\]
By the triangle inequality and the Cauchy--Schwarz inequality,
\begin{align} \label{dyDec}
\Big\| \sum_{k=0}^{\infty} T_kf_k \Big\|_{L_x^q(\mathbb B^n; L_t^r(\mathbb I))} 
&\le  \Big( \sum_{k=0}^{\infty} 2^{-2\epsilon k  } \Big)^{1/2} \Big( \sum_{k=0}^{\infty}  2^{2\epsilon k  } \|  T_kf_k \|_{L_x^q(\mathbb B^n; L_t^r(\mathbb I))}^{2} \Big)^{1/2} \nonumber \\
&\le C_{\epsilon} \Big( \sum_{k=0}^{\infty}  2^{2\epsilon k  } \|  T_kf_k \|_{L_x^q(\mathbb B^n; L_t^r(\mathbb I))}^{2} \Big)^{1/2}
\end{align}
for all $\epsilon>0$.

It is easy to show that
\[ 
\|  T_0f_0 \|_{L_x^q(\mathbb B^n; L_t^r(\mathbb I))} \le C \|f_0\|_{L^2(\mathbb R^n)}.
\]
Indeed, we have
\(
\|  T_0f_0 \|_{L_x^q(\mathbb B^n; L_t^r(\mathbb I))} \le \| T_0f_0 \|_{L^\infty(\mathbb I \times \mathbb B^n)} \le \|\hat f_0 \|_{L^1(\mathbb R^n)}. 
\)
Since $\hat f_0$ is supported in the ball $B(0,2)$, by the Cauchy--Schwarz inequality this  is bounded by
$C\|\hat f_0 \|_{L^2(\mathbb R^n)}$, which equals  $C\|f_0 \|_{L^2(\mathbb R^n)}$ by Plancherel's theorem.

By the second condition \eqref{homog_cond} we have 
\begin{equation} \label{scl}
U f( t,x) = T_k[(f \ast \varphi^{\vee})(2^{k} \cdot)](2^{-mk}t,2^{-k}x).
\end{equation}
Thus by using a proper finite partitioning and \eqref{red_form},
\begin{equation*}
\|T_k f_k \|_{L_x^q(\mathbb B^n; L_t^r(\mathbb I))} \le C_\alpha 2^{-k\alpha}\|f_k\|_{L^2(\mathbb R^n)}
\end{equation*}
for $k=1,2,3,\cdots$.
We insert these estimates into \eqref{dyDec}. Then,
\begin{align*}
\Big\| \sum_{k=0}^{\infty} T_kf_k \Big\|_{L_x^q(\mathbb B^n; L_t^r(\mathbb I))} 
&\le C_{\alpha,\epsilon} \Big( \sum_{k} 2^{-2k(\alpha - \epsilon)} \|  f_k \|_{L^2(\mathbb R^n)}^{2} \Big)^{1/2} \\
&= C_{\alpha,\epsilon} \|  f_k \|_{H^{-\alpha + \epsilon}(\mathbb R^n)}.
\end{align*}
Therefore, by \eqref{secG} we have $K_{loc}(L_x^qL_t^r;\alpha-\epsilon)$ for all $\epsilon >0$.
\end{proof}

\subsection{Wave packet decomposition}

Now we introduce a wave packet decomposition. We first define some functions for partitioning.
Let $\phi$ be a bump function supported in $B(0,3/2)$ such that 
\begin{equation} \label{part1}
\sum_{j \in \mathbb Z^n} \phi^2(x-j)=1.
\end{equation}
Let $\psi$ be a Schwartz function whose Fourier transform is supported in $B(0,2/3)$ such that 
\begin{equation} \label{part2}
\sum_{j \in \mathbb Z^n} \psi^2(\xi-j)=1.
\end{equation} 
For $R \ge 1$, let $\mathcal P_R :=  R \mathbb Z^n$ and $\mathcal V_{R^{-1}} := R^{-1} \mathbb Z^n$ be lattice sets.
For each $(l,v) \in \mathcal P_{R} \times \mathcal V_{R^{-1}}$ we define $\phi_l$ and $\psi_v$ as $\phi_l(x) := \phi(\frac{x-l}{R})$ and $\psi_v(\xi) := \psi(R(\xi-v))$ respectively, and using these we define a function $f_{(l,v)}$ by
\begin{equation}\label{def:f_T}
f_{(l,v)} :=  m_{v}(\phi_{l}f),
\end{equation}
where $m_{v}$ is a multiplier defined by $\widehat{m_{v}f}(\xi) = \psi_{v}(\xi) \hat f(\xi)$. We see that $f_{(l,v)}$ is supported in $B(l,CR)$ and its Fourier transform is essentially supported in $B(v,\frac{1}{CR})$.
We have the following decomposition of $f$ as
\begin{equation} \label{waveDec} 
f = \sum_{(l,v) \in \mathcal P_{R} \times \mathcal V_{R^{-1}}}  f_{(l,v)}.
\end{equation}
The $f_{(l,v)}$ have the following properties: 
%
\begin{lem} \label{lem:wavepacket}
Let $R \ge 1$, and for each $(l,v) \in \mathcal P_R \times \mathcal V_{R^{-1}}$ let $f_{(l,v)}$ be defined as in \eqref{def:f_T}. Then,
\begin{itemize}
\item[(i)]
For $(t,x) \in I_{R^2} \times \mathbb R^n$,
\[
|Uf_{(l,v)}(t,x)| \le C_M R^{-n/2}(1+R^{-1}|(x-l)+t \nabla \Phi(v)|)^{-M} \|f_{(l,v)}\|_{L^2(\mathbb R^n)}
\]
for all $M \ge 1$.
\item[(ii)]
\[
\Big( \sum_{(l,v) \in \mathcal P_{R} \times \mathcal V_{R^{-1}}}  \|f_{(l,v)} \|_{L^2(\mathbb R^n)}^2 \Big)^{1/2} = \|f\|_{L^2(\mathbb R^n)}.  
\]
\item[(iii)]
For any sub-collection $\mathcal{\tilde P} \subset \mathcal P_{R}$ and $\mathcal{\tilde V} \subset \mathcal V_{R^{-1}}$,
\[
\Big\| \sum_{(l,v) \in \mathcal{\tilde P} \times \mathcal{\tilde V}} f_{(l,v)}\Big\|_{L^2(\mathbb R^n)}
\le C \Big( \sum_{(l,v) \in \mathcal{\tilde P} \times \mathcal{\tilde V}} \|f_{(l,v)}\|_{L^2(\mathbb R^n)}^2 \Big)^{1/2}.
\] 
\end{itemize}
\end{lem}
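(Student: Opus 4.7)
The plan is to prove parts (i), (ii), (iii) in order, with (i) being the analytic heart via oscillatory-integral analysis and (ii)--(iii) following from Plancherel and almost-orthogonality once (i) is in place. A useful preparatory identity I would record first is $f_{(l,v)}(y) = (m_v g)(y-l)$, where $g(z):=\phi(z/R)f(l+z)$; equivalently, $\widehat{f_{(l,v)}}(\xi) = e^{-il\cdot\xi}\psi_v(\xi)\hat g(\xi)$. This exposes the $l$-dependence as a pure modulation, which is exactly why the decay in (i) is centered at $(x-l)+t\nabla\Phi(v)$ rather than at $x+t\nabla\Phi(v)$.

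For (i), I would write
\[
Uf_{(l,v)}(t,x)=\int e^{i((x-l)\cdot\xi+t\Phi(\xi))}\,\varphi(\xi)\psi_v(\xi)\hat g(\xi)\,d\xi,
\]
rescale $\xi=v+\eta/R$ to normalize the frequency bump to unit scale, and factor out the unimodular phase $e^{i((x-l)\cdot v+t\Phi(v))}$. Taylor-expansion of $\Phi$ about $v$, together with homogeneity and $|v|\sim 1$, converts the remaining phase into $\eta\cdot((x-l)+t\nabla\Phi(v))/R + tR^{-2}\,O(|\eta|^2)$; in the stated time range the error and all its $\eta$-derivatives are $O(1)$. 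Repeated integration by parts in $\eta$ along the direction of $(x-l)+t\nabla\Phi(v)$ then produces the decay factor $(1+R^{-1}|(x-l)+t\nabla\Phi(v)|)^{-M}$, while the remaining $\eta$-integral over a compact set is controlled by Cauchy--Schwarz plus Plancherel, so that $\hat g$ contributes $R^{n/2}\|f_{(l,v)}\|_{L^{2}}$; combined with the Jacobian $R^{-n}$ this yields the stated $R^{-n/2}$ amplitude.

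For (ii), Plancherel gives $\|f_{(l,v)}\|_{L^2}^2 = \int|\psi_v|^2|\widehat{\phi_l f}|^2\,d\xi$; summing in $v$ collapses via $\sum_v|\psi_v(\xi)|^2\equiv 1$ to $\|\phi_l f\|_{L^2}^2$, and summing in $l$ collapses via $\sum_l|\phi_l(x)|^2\equiv 1$ to $\|f\|_{L^2}^2$. For (iii), I would first use Plancherel and the finite overlap of $\{\psi_v\}_{v\in\mathcal V_{R^{-1}}}$ at each $\xi$ (only $O(1)$ of them are nonzero at any given frequency) to reduce, up to a constant, to proving for each fixed $v$
\[
\Big\|\sum_{l\in\mathcal{\tilde P}}f_{(l,v)}\Big\|_{L^2}^{2}\le C\sum_{l\in\mathcal{\tilde P}}\|f_{(l,v)}\|_{L^2}^{2}.
\]
This spatial Bessel inequality I would deduce from (i) applied at $t=0$, which produces a pointwise envelope of Schwartz type in $(y-l)/R$; from that, $|\langle f_{(l,v)},f_{(l',v)}\rangle|\le C_{M}(1+R^{-1}|l-l'|)^{-M}\|f_{(l,v)}\|_{L^2}\|f_{(l',v)}\|_{L^2}$, and Schur's test on the $R\mathbb Z^n$ lattice finishes the sum.

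The main obstacle is the Taylor-remainder control in (i): one must verify that $t\bigl[\Phi(v+\eta/R)-\Phi(v)-R^{-1}\nabla\Phi(v)\cdot\eta\bigr]$ has all $\eta$-derivatives bounded by $O(1)$ uniformly in $t$ throughout the $M$-fold integration by parts, which is where the homogeneity of $\Phi$ and the length scale of the time interval enter decisively. Once (i) is in hand, (ii) is a direct Plancherel identity and (iii) is a Cotlar--Stein-style summation built on the envelope from (i).
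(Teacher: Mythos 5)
Your parts (ii) and (iii) land correctly. Part (ii) is exactly the paper's double Plancherel computation. For (iii) the paper argues more directly: after the Plancherel/almost-orthogonality reduction to fixed $v$ (which you also use), it simply observes that the functions $f_{(l,v)}$, $l\in R\mathbb Z^n$, have boundedly overlapping supports $B(l,CR)$, so the Bessel inequality is immediate; your Schur/Cotlar--Stein detour works but is unnecessary, and note that ``(i) applied at $t=0$'' is not literally available, since (i) is stated only for $t\in I_{R^2}$ and $Uf_{(l,v)}(0,\cdot)\neq f_{(l,v)}$ (there is the extra cutoff $\varphi$). What you actually need is just the exact compact support of $f_{(l,v)}=(m_vg)(\cdot-l)$, which your preparatory identity already records.

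The genuine gap is in the last step of (i). You keep $\hat g$ inside the oscillatory integral as part of the amplitude $\varphi(v+\eta/R)\,\psi(\eta)\,\hat g(v+\eta/R)$ and integrate by parts $M$ times. In the terms where derivatives fall on $\hat g(v+\eta/R)$ you produce factors $R^{-|\gamma|}(\partial^{\gamma}\hat g)(v+\eta/R)=\widehat{(-iy/R)^{\gamma}g}(v+\eta/R)$, i.e.\ Fourier transforms of $g$ against modified bumps; these are no longer paired with the factor $\psi_v$, so the concluding Cauchy--Schwarz on the $\eta$-integral yields $R^{n/2}\|g\|_{L^2}=R^{n/2}\|\phi_l f\|_{L^2}$ (or $\|g\|_{L^1}$), not $R^{n/2}\|f_{(l,v)}\|_{L^2}$. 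Since $\|f_{(l,v)}\|_{L^2}$ is, up to constants, $\|\psi_v\widehat{\phi_l f}\|_{L^2}$, it can be arbitrarily small compared with $\|\phi_l f\|_{L^2}$ (take $f$ with little frequency content near $v$), so the bound your argument actually delivers is strictly weaker than the one claimed. The paper avoids this by first writing $Uf_{(l,v)}(t,x)=\int K_v(t,x-y)f_{(l,v)}(y)\,dy$, where $K_v$ has amplitude $\tilde\chi_v\varphi$ with $\tilde\chi_v\equiv 1$ on the Fourier support of $f_{(l,v)}$: the stationary-phase/integration-by-parts analysis is carried out on $K_v$ alone, whose amplitude derivatives are controlled by constants independent of $f$, and the norm $\|f_{(l,v)}\|_{L^2}$ enters only at the very end through Cauchy--Schwarz over the support $B(l,CR)$ of $f_{(l,v)}$. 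You should reorganize (i) in this convolution form (or otherwise explain how $\psi_v$ and $\hat g$ remain paired throughout the integration by parts); everything else in your Taylor-expansion and phase analysis, including the control of $tR^{-2}\eta_{R^{-1},v}(\eta)$ for $t\in I_{R^2}$, matches the paper.
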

\begin{rem}
For each $(l,v) \in \mathcal P_{R} \times \mathcal V_{R^{-1}}$ we define a set $T_{(l,v)}$ by 
\[
T_{(l,v)}= \{ (t,x) \in \mathbb R \times \mathbb R^n : \big| (x-l) + t \nabla \Phi(v) \big| \le R \},
\]
which is the $R$-neighborhood of the line that is passing through $(0,l) \in \mathbb R \times \mathbb R^n$ and parallel to $(-1,\nabla \Phi(v))$. 
The property (i) implies that $Uf_{(l,v)}$ is essentially supported in $T_{(l,v)}$ in $I_{R^m} \times \mathbb R^n$. 
\end{rem}

\begin{proof}
Consider Property (i). 
We write as
\begin{equation*} 
Uf_{(l,v)}(t,x) 
= \int K_v(t,x-y) f_{(l,v)}(y) dy,
\end{equation*}
where the kernel $K_v$ is defined by
\begin{equation} \label{EfForm}
K_v(t,x) = \int e^{ i (x \cdot \xi + t \Phi(\xi))} \tilde \chi_{v} \varphi(\xi) d\xi
\end{equation}
where $\tilde \chi_{v}$ is a smooth function supported in a small neighborhood of the ball $B(v,\frac{2}{3R})$ such that 
$\tilde \chi_{v} = 1$ on the ball $B(v,\frac{2}{3R})$. 
Using a stationary phase method we can obtain that for $(t,x) \in I_{R^2} \times \mathbb R^n  $,
\begin{equation} \label{ker_est}
|K_v(t,x)| \le C_M R^{-n} (1+R^{-1}|x+ t \nabla\Phi(v) |)^{-M}, \qquad \forall M \ge 1.
\end{equation}
Indeed,
by change of variables $\xi \mapsto R^{-1}\xi +v$,
\[
K_v(t,x)= R^{-n}\int e^{i\Omega(t,x,\xi)} \rho(\xi) d\xi,
\]
where $\rho$ is a smooth function whose support is in $B(0,1)$, and  
\[
\Omega(t,x,\xi) = x\cdot (R^{-1}\xi +v) + t \Phi(R^{-1} \xi +v). 
\] 
We break $\Phi(R^{-1}\xi +v)$ into a Taylor series in $\xi$. Let us write as
\[
\Omega(t,x,\xi)=  x\cdot (R^{-1} \xi +v) + t \Phi(v) + R^{-1} t \nabla \Phi(v) \cdot \xi + R^{-2} t \eta_{R^{-1},v}(\xi)
\]
where
\[
\eta_{R^{-1},v}(\xi) = \xi^{T} H_\Phi(v) \xi + O(R^{-1})
\]
in $B(0,1)$.
We now have 
\begin{equation} \label{eqn:K_v}
|K_v(t,x)| = R^{-n} \bigg| \int e^{iR^{-1}(x+ t \nabla \Phi(v) ) \cdot \xi} \tilde \rho(\xi) d\xi \bigg|
\end{equation}
where
\[
\tilde \rho(\xi) := e^{iR^{-2} t\eta_{R^{-1},v}(\xi)}  \rho(\xi).
\]
Observe that if $|t| \le CR^{2}$ then for nonnegative integers $\gamma_1, \cdots, \gamma_n$, the differential function 
\(
|\partial_1^{\gamma_1} \cdots \partial_n^{\gamma_n} \tilde \rho|
\) is bounded by a constant $C(\gamma_1, \cdots, \gamma_n,v)$ in $B(0,1)$ which is independent of $R$ and $t$.
By using integration by parts with this observation (for details, see \cite{stein1993harmonic}*{Proposition 4 in Chapter VIII}) we can obtain that for $(t,x) \in I_{R^2} \times \mathbb R^n$,
\[
\bigg| \int e^{i R^{-1}  (x+ t \nabla \Phi(v) ) \cdot \xi} \tilde \rho(\xi) d\xi \bigg| \le C_{M}(1+R^{-1}|x+ t \nabla\Phi(v) |)^{-M}, \qquad \forall M \ge 1.
\]
Inserting this estimate into \eqref{eqn:K_v} we thus have \eqref{ker_est}.

From \eqref{ker_est} it follows that for $(t,x) \in   I_{R^2} \times \mathbb R^n$,
\begin{align}
  |U f_{(l,v)}(t,x)| &\le C_M R^{-n} (1+R^{-1}|x-l + t \nabla\Phi(v) |)^{-M}
\int | f_{(l,v)}(y) | dy \nonumber\\
&\le C_M R^{-n/2} (1+R^{-1}|x-l + t \nabla\Phi(v) |)^{-M} \| f_{(l,v)} \|_{L^2(\mathbb R^n)}
\label{EfTest}
\end{align}
for any $M \ge 1$, where the Cauchy-Schwarz inequality is used in the last line.
Thus, Property  (i) is obtained.

\smallskip

Consider Property (ii). 
By Plancherel's theorem and \eqref{part2},
\[
\sum_{l} \sum_{v} \int |f_{(l,v)}|^2 =
\sum_{l} \sum_{v} \int |\psi_{v} \widehat{\phi_l f}|^2 
= \sum_{l}  \int |\widehat{\phi_l f}|^2.
\]
We use Plancherel's theorem again, and by \eqref{part1} the above equation equals 
\[
\sum_{l}  \int |\phi_l f|^2
= \int |f|^2.
\]
Thus we have Property (ii).

\smallskip

Consider Property (iii). By Plancherel's theorem and a partition of unity $\{\psi_v\}$,
\[
\int \Big|\sum_{v\in \mathcal{\tilde V}} \sum_{l\in \mathcal{\tilde P}} f_{(l,v)} \Big|^2 \le C
\sum_{v\in \mathcal{\tilde V}} \int \Big|\psi_{v} \sum_{l\in \mathcal{\tilde P}}\widehat{\phi_l f} \Big|^2. 
\]
By Plancherel's theorem, the right side of the above equation equals 
\[
C \sum_{v\in \mathcal{\tilde V}} \int \Big|\sum_{l\in \mathcal{\tilde P}} f_{(l,v)} \Big|^2.
\]  
Since $f_{(l,v)}$ is supported in $B(l,CR)$, the above equation is
\[
\le  C \sum_{v\in \mathcal{\tilde V}} \sum_{l\in \mathcal{\tilde P}} \int |  f_{(l,v)} |^2.
\]
Thus, we have Property (iii).
\end{proof}
%

\section{From local-in-time to global-in-time}
In this section we prove Theorem \ref{thm:LocToGlb}.
From Lemma \ref{lem:LP_rdc} we see that Theorem \ref{thm:LocToGlb} is reduced to the following:
\begin{thm} \label{thm:G2}
Let $2 \le q,r <\infty$ and $R \ge 1$.
\begin{itemize}
\item[(i)]
Suppose that the estimate
\begin{equation} \label{starInd}
\| U f \|_{L_x^q(B_R; L_t^r(I_{R^{m}}) )} \le  A(R) \|f\|_{L^{2}(\mathbb R^n)}
\end{equation}
holds for all $f \in L^2(\mathbb R^n)$. Then for any $\tilde r>r$, the estimate
\begin{equation} \label{eqn:gg}
\| U f \|_{L_x^{q}(B_R; L_t^{\tilde r}(\mathbb R) )} 
\le C_{\tilde r} R^{n(\frac{1}{r} - \frac{1}{\tilde r})}  A(R) \|f\|_{L^{2}(\mathbb R^n)}
\end{equation} 
holds for all $f \in L^2(\mathbb R^n)$, 
where  
\(
A(R) := C_{\alpha} R^{-\alpha+ \frac{n}{q}+ \frac{m}{r} -\frac{n}{2} }.
\)

\item[(ii)]
If the estimate \eqref{starInd} with $L_t^rL_x^q$ replacing $L_x^qL_t^r$ holds for all $f \in L^2(\mathbb R^n)$,  then for any $\tilde r > r$,
\begin{equation} \label{eqn:gg2}
\| U f \|_{L_x^{q}(B_R; L_t^{\tilde r}(\mathbb R) )} 
\le C_{\tilde r,\epsilon} R^{n(\frac{1}{r} - \frac{1}{\tilde r}) +\epsilon}  A(R) \|f\|_{L^{2}(\mathbb R^n)}
\end{equation}
for all $f \in L^2(\mathbb R^n)$ and all $\epsilon>0$.
\end{itemize}
\end{thm}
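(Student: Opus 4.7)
The plan is to combine the wave-packet decomposition of Lemma \ref{lem:wavepacket} with time-translation invariance of the propagator $e^{it\Phi(D)}$, which preserves both the $L^2$-norm and the frequency support $\Pi$. First I would partition $\mathbb R = \bigsqcup_{k \in \mathbb Z} J_k$ with $J_k := [kR^m, (k+1)R^m]$; translation in time transfers the hypothesis of (i) to every such interval:
\[
\| Uf \|_{L_x^q(B_R; L_t^r(J_k))} \le A(R) \|f\|_{L^2}.
\]
Next I would decompose $f = \sum_{(l,v)} f_{(l,v)}$ and, for each $k$, set $S_k := \{(l,v) : T_{(l,v)} \cap (J_k \times B_R) \ne \emptyset\}$ and $f^{(k)} := \sum_{(l,v) \in S_k} f_{(l,v)}$. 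Because $|\nabla \Phi(v)| \sim 1$ on $\Pi$, the trace of each tube $T_{(l,v)}$ on $\mathbb R \times B_R$ occupies a time-slab of length $\sim R \ll R^m$, so each wave packet belongs to only $O(1)$ sets $S_k$; combined with Lemma \ref{lem:wavepacket}(iii) this yields $\sum_k \|f^{(k)}\|_{L^2}^2 \lesssim \|f\|_{L^2}^2$.

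By the rapid decay in Lemma \ref{lem:wavepacket}(i), the contribution of the non-active packets $(l,v) \notin S_k$ to $J_k \times B_R$ is negligible (take the exponent $M$ large), so the translated local estimate applied to $Uf^{(k)}$ on $J_k$ gives
\[
\| Uf \|_{L_x^q(B_R; L_t^r(J_k))} \lesssim A(R) \|f^{(k)}\|_{L^2}.
\]
To upgrade from $L_t^r$ on $J_k$ to $L_t^{\tilde r}$ on $\mathbb R$, I would use that for each fixed $x$ the temporal Fourier transform of $Uf^{(k)}(\cdot, x)$ is supported in the compact set $\Phi(\Pi)$, whose diameter is independent of $R$; Bernstein's inequality then furnishes $\| Uf^{(k)}(\cdot, x) \|_{L_t^{\tilde r}(\mathbb R)} \lesssim \| Uf^{(k)}(\cdot, x) \|_{L_t^r(\mathbb R)}$, and since for $x \in B_R$ this function is essentially supported in an $R$-neighbourhood of $J_k$, the right-hand side is controlled by the local-in-time norm just bounded. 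Assembling by Minkowski in $k$ (valid when $q \ge \tilde r$) together with $\ell^{\tilde r} \hookrightarrow \ell^2$ (valid since $\tilde r \ge r \ge 2$) then yields the claim. The factor $R^{n(1/r - 1/\tilde r)}$ in the statement cushions the remaining regime of exponents: there one instead pigeonholes the wave packets by their velocity direction $v$, and the $R^n$ records that up to $R^n$ packets (one per $v \in \mathcal V_{R^{-1}} \cap \Pi$) may constructively superimpose at a single spatial point.

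The main obstacle I anticipate is handling the exponent orderings for which Minkowski runs the wrong way (e.g.\ $q < \tilde r$): here the $L^2$-orthogonality of the $\{f^{(k)}\}$ has to be traded for a more intricate pointwise overlap count, and it is precisely in this count that the $R^{n(1/r-1/\tilde r)}$ factor enters naturally. Part (ii), in which $L_t^r$ is the outer norm, is substantially more delicate, since the orthogonality provided by Lemma \ref{lem:wavepacket}(ii) sits in $L^2$ and is incompatible with an outer $L^r$ in $t$; the extra $R^\epsilon$ in the conclusion signals that a further dyadic pigeonholing over time scales or frequencies is needed, and per the introduction this is the content of the "second proposition", stated in Section 3 but proved in Section 4 by a separate method.
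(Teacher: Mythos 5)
There is a genuine gap, and it sits at the heart of your argument: the scale-$R$ wave packets are \emph{not} localized to their tubes globally in time. Lemma \ref{lem:wavepacket}(i) gives the decay $|Uf_{(l,v)}(t,x)|\lesssim_M R^{-n/2}(1+R^{-1}|(x-l)+t\nabla\Phi(v)|)^{-M}\|f_{(l,v)}\|_2$ only for $t$ in a window of length $O(R^2)$; its proof requires $|t|\le CR^2$ so that the quadratic remainder $R^{-2}t\,\eta_{R^{-1},v}(\xi)$ in the phase has bounded derivatives. For $|t|\gg R^2$ that remainder oscillates and the packet disperses over a spatial region of size $\sim |t|/R$ rather than staying in $T_{(l,v)}$. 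Consequently, for all but $O(1)$ of your slabs $J_k\times B_R$ the ``non-active'' packets $(l,v)\notin S_k$ contribute non-negligibly, the reduction $\|Uf\|_{L^q_xL^r_t(J_k)}\lesssim A(R)\|f^{(k)}\|_2$ is unjustified, and the claim that $Uf^{(k)}(\cdot,x)$ is temporally concentrated near $J_k$ (which you need to close the Bernstein step) fails for the same reason. Note also that if your scheme did close, it would give the global bound with $\tilde r=r$ and no $R^{n(1/r-1/\tilde r)}$ factor via $\ell^2\subset\ell^r$ alone — your attempt to reinsert that factor through a ``velocity pigeonhole'' is a heuristic, not an argument, and it is not where the factor comes from.

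The paper's proof has two ingredients, neither of which appears in your proposal. First (Proposition \ref{prop:extIH}), one passes from $I_{R^m}$ to an arbitrary finite interval $I_H$ by an \emph{induction on scales} $H\mapsto H^2$: at each stage one re-decomposes $f$ into wave packets at the \emph{current} scale $H$ (so that tube localization is valid on $I_{H^2}$), splits $I_{H^2}$ into $H$-intervals, and uses the transversality of the tubes to the $t$-axis to see that each tube meets $O(1)$ cubes $\Delta_j$; the constant lost at each of the $\sim\log\log H$ stages accumulates to the unavoidable $H^\epsilon$. Second (Proposition \ref{prop:epRev}, proved in Section 4), this $H^\epsilon$-lossy estimate on every finite interval is upgraded to a global one by Tao's epsilon-removal argument: dualize to a restriction estimate, exploit the stationary-phase decay $|\widehat{d\sigma}(\xi)|\lesssim(1+|\xi|)^{-n/2}$ through sparse collections of balls, and run a restricted-type summation in which $\tilde r>r$ absorbs the $\delta$-losses and $|\mathrm{proj}_xE(h)|\le|B_R|\sim R^n$ produces exactly the factor $R^{n(\frac1r-\frac1{\tilde r})}$. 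The dispersive decay of $\widehat{d\sigma}$ is the mechanism that lets one sum over time globally; your proposal never invokes it, and without it (or a substitute) the passage from finite intervals to $\mathbb R$ does not go through.
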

Note that the ${\epsilon}$-loss in \eqref{eqn:gg2}  is absorbed to those in $K_{glb}(L_t^rL_x^q;\alpha-\epsilon)$.
%

\subsection{Extension the time interval $I_{R^m}$ to an arbitrary finite interval $I_H$}
To prove Theorem \ref{thm:G2} we first extend the time interval $I_{R^m}$ to an arbitrary finite interval $I_H:= [H/2,2H]$.
We will use the geometric observation in \cite{lee2006pointwise}*{Lemma 2.3} as follows.
Let $\Gamma := \{(\Phi(\xi),\xi)\}$ be a surface in $\mathbb R \times \mathbb R^n$, where $\Phi$ satisfies the first condition of \eqref{homog_cond}. Then, 
for any  $\xi \in\mathbb R^n$ with $ 1/2 \le |\xi| \le 2$ the angle between the normal vector $(-1,\nabla \Phi(\xi))$ to $\Gamma$ and the $t$-axis is lager than some positive constant. 

\begin{prop} \label{prop:extIH}
Let $R \ge 1$.
\begin{itemize}
\item[(i)]
Let $2 \le q, r \le \infty$.
If one has \eqref{starInd} for all $f \in L^2(\mathbb R^n)$, then for any $H \ge 1$,
\begin{equation} \label{eqGoal}
\| U f \|_{L_x^q(B_R; L_t^r(I_H) )} 
\le C_{\epsilon} H^{\epsilon}  A(R) \|f\|_{L^{2}(\mathbb R^n)}
\end{equation}
for all $f \in L^2(\mathbb R^n)$ and all $\epsilon >0$.

\item[(ii)]
Let $1 \le q \le \infty$ and $2 \le r \le \infty$. If the estimate \eqref{starInd} with $L_t^rL_x^q$ replacing $L_x^qL_t^r$ is valid, then the the estimate \eqref{eqGoal} with $L_t^rL_x^q$ in replacement of  $L_x^qL_t^r$ holds. 
\end{itemize}
\end{prop}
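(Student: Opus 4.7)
The plan is to partition $I_H$ into pieces of length $R^m$ on which \eqref{starInd} applies, and then to use wave-packet orthogonality to avoid the $(H/R^m)^{1/r}$ loss that a direct Minkowski summation would produce. Note first that the operator $U$ is time-translation invariant: $Uf(\cdot+t_0,\cdot) = U[e^{it_0\Phi(D)}f]$ and $\|e^{it_0\Phi(D)}f\|_{L^2} = \|f\|_{L^2}$, so \eqref{starInd} holds on every translate $I_{R^m}+t_0$ with the same constant $A(R)$. In particular, when $H \le R^m$ the interval $I_H$ already fits into such a translate and \eqref{eqGoal} is immediate.

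Assume $H > R^m$ and split $I_H$ into $N \sim H/R^m$ sub-intervals $\{I_j\}$ of length $\sim R^m$. Decompose $f = \sum_{(l,v) \in \mathcal P_R \times \mathcal V_{R^{-1}}} f_{(l,v)}$ using the scale-$R$ wave packets of Subsection 2.2 and, for each $j$, set
\[
\mathcal T_j := \{(l,v) : T_{(l,v)} \cap (B_R \times I_j) \neq \emptyset\}, \qquad f_j := \sum_{(l,v)\in\mathcal T_j} f_{(l,v)}.
\]
The geometric observation from \cite{lee2006pointwise}*{Lemma 2.3}, valid because $|\nabla\Phi(v)| \gtrsim 1$ on the support of $\varphi$ by the first condition of \eqref{homog_cond}, implies that any single tube $T_{(l,v)}$ meets $B_R$ during a time window of length $O(R)$. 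Since $R \le R^m$, every $(l,v)$ belongs to at most $O(1)$ distinct $\mathcal T_j$'s, and Lemma \ref{lem:wavepacket}(ii)--(iii) then yield the key orthogonality
\[
\sum_j \|f_j\|_{L^2}^2 \le C\,\|f\|_{L^2}^2.
\]

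On each $I_j$, the contribution from $(l,v) \notin \mathcal T_j$ is negligible by the rapid decay in Lemma \ref{lem:wavepacket}(i), so applying \eqref{starInd} to $f_j$ (after translating $I_j$ onto $I_{R^m}$) gives $\|Uf\|_{L^q_x L^r_t(B_R\times I_j)} \lesssim A(R)\,\|f_j\|_{L^2}$. To sum, for $q \ge r$ I apply Minkowski on $\ell^r_j$; for $q < r$ I use the $L^{q/r}$ quasi-triangle inequality and sum in $\ell^q_j$; the $r=\infty$ case is handled by $\sup_j a_j \le (\sum_j a_j^q)^{1/q}$. In each case the inner summation reduces to controlling $(\sum_j\|f_j\|_{L^2}^p)^{1/p}$ for some $p \in \{r, q\}$ with $p \ge 2$, and since $\|f_j\|_{L^2}\le C\|f\|_{L^2}$ I interpolate via the orthogonality:
\[
\Big(\sum_j \|f_j\|_{L^2}^p\Big)^{1/p} \le \|f\|_{L^2}^{(p-2)/p}\Big(\sum_j\|f_j\|_{L^2}^2\Big)^{1/p} \le C\|f\|_{L^2}.
\]
This yields \eqref{eqGoal}. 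Part (ii) is proved identically after interchanging the $L^q_x$ and $L^r_t$ norms.

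The main obstacle is the heuristic ``$Uf \approx Uf_j$ on $I_j$'': Lemma \ref{lem:wavepacket}(i) guarantees wave-packet localization only over time windows of length $R^2$, whereas $|I_j| \sim R^m$ can be much larger when $m > 2$. Making this step rigorous requires partitioning each $I_j$ further into $R^2$-blocks, quantifying the polynomial decay of tails summed over all distant tubes, and showing the decay exponent beats the number of blocks; the $H^\epsilon$ factor in the conclusion is what absorbs this bookkeeping.
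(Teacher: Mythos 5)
Your decomposition of $I_H$ into length-$R^m$ blocks carrying scale-$R$ wave packets has a gap that is more serious than the ``bookkeeping'' you describe in your last paragraph, and it is not absorbed by the $H^\epsilon$. Lemma \ref{lem:wavepacket}(i) localizes $Uf_{(l,v)}$ to the tube $T_{(l,v)}$ only for times $|t|\lesssim R^2$; the obstruction is not the \emph{length} of $I_j$ (so it is not special to $m>2$) but its \emph{location} in time. Since $\widehat{f_{(l,v)}}$ has frequency support of diameter $\sim R^{-1}$, the group velocities spread by $\sim R^{-1}$, and at time $t$ the packet occupies a ball of radius $\sim R+|t|/R$ about $l-t\nabla\Phi(v)$. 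For a block $I_j$ sitting at time $t\sim H\gg R^2$ (which happens for every $m$, including $m=2$, once $H$ is large) the packet has dispersed to width $H/R\gg R$, so the packets genuinely contributing on $B_R\times I_j$ are not those in your $\mathcal T_j$, and no further partition of $I_j$ into $R^2$-blocks restores tube localization: the bound \eqref{ker_est} is simply unavailable there, because $\tilde\rho=e^{iR^{-2}t\eta_{R^{-1},v}}\rho$ has derivatives of size $R^{-2}|t|\gg1$ and integration by parts gives nothing. If one corrects $\mathcal T_j$ to account for the spreading, a single $(l,v)$ meets $B_R\times I_j$ over a time window of length $\sim H/R$, hence for $\sim H/R^{m+1}$ of the blocks, so the orthogonality degrades to $\sum_j\|f_j\|_{L^2}^2\lesssim (1+HR^{-m-1})\|f\|_{L^2}^2$ and your final estimate carries a power of $H$ rather than $H^\epsilon$.

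The paper avoids exactly this by an induction on scales: to pass from $I_H$ to $I_{H^2}$ it splits into length-$H$ blocks and decomposes $f$ into wave packets at spatial scale $H$, i.e.\ $(l,v)\in\mathcal P_H\times\mathcal V_{H^{-1}}$, which stay coherent on width-$H$ tubes for all $|t|\lesssim H^2$ --- precisely the interval being analyzed. Transversality of these tubes to the $t$-axis then gives the $O(H^{C\epsilon})$ overlap with the cubes $H^\epsilon\Delta_j$, each doubling step costs only $H^{C\epsilon}$, and only $O(\log\log H)$ steps are needed, which is how the $H^\epsilon$ actually arises. This matching of the packet scale to the current time scale is the idea missing from your argument; the rest of your scheme (time-translation invariance of \eqref{starInd}, the $\ell^{\min(q,r)}$ summation of the blocks, and the embedding $\ell^2\subset\ell^s$ for $s\ge2$) is sound and coincides with what the paper does.
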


\begin{proof}
Consider (i).
It suffices to show that for any $H \ge 1$,
\begin{equation} \label{Kaloc}
\| U f \|_{L_x^q(B_R; L_t^r(I_{H}) )} \le  C_\epsilon H^{\epsilon} A(R)\|f\|_{L^{2}(\mathbb R^n)} + C_M H^{-M}\|f\|_{L^{2}(\mathbb R^n)}
\end{equation}
for all $f \in L^2(\mathbb R^n)$, $\epsilon >0$ and $M \ge 1$.

We will prove \eqref{Kaloc} by induction. For $1 \le H \le R^{m}$, one has \eqref{Kaloc} from \eqref{starInd}.
We assume that for some $H \ge 1$ the estimate
\eqref{Kaloc} holds for all $f \in L^2(\mathbb R^n)$, $\epsilon > 0$ and $M \ge 1$. Now, it is enough to show \eqref{Kaloc} with $I_{H^2}$ replacing $I_H$.
Let $\{t_j\}$ be a maximal $H$-separated subset in the interval $I_{H^2}$, and let $I_j$ be the interval of length $H$ with a center at $t_j$.
If $q \le r$ then by embedding $\ell^q \subset \ell^r$,
\begin{align*} 
\| U f \|_{L_x^q(B_R; L_t^r(I_{H^2}) )}  
&\le \Big( \int_{B_R} \Big( \sum_{j} \| Uf \|_{L_t^r(I_j)}^r \Big)^{q/r} dx\Big)^{1/q} \nonumber\\
&\le  \Big( \int_{B_R} \sum_{j}   \| Uf \|_{L_t^r(I_j)}^q  dx\Big)^{1/q} \nonumber\\
&= \Big(  \sum_{j} \| Uf \|_{L_x^q(B_R; L_t^r(I_j))}^q  \Big)^{1/q}.
\end{align*}
If $q > r$ then by Minkowski's inequality,
\begin{align*} 
\| U f \|_{L_x^q(B_R; L_t^r(I_{H^2}) )}  
&\le \Big( \int_{B_R} \Big( \sum_{j} \| Uf \|_{L_t^r(I_j)}^r \Big)^{q/r} dx\Big)^{1/q} \nonumber\\
&\le  \Big(  \sum_{j} \| Uf \|_{L_x^q(B_R; L_t^r(I_j))}^r  \Big)^{1/r}.
\end{align*}
Thus,
\begin{equation} \label{eqn:othIj}
\| U f \|_{L_x^q(B_R; L_t^r(I_{H^2}) )}  
\le  \Big(  \sum_{j} \| Uf \|_{L_x^q(B_R; L_t^r(I_j))}^s  \Big)^{1/s},
\end{equation}
where $s= \min\{q,r\}$.

Let $\Delta_j := I_j \times B_{H}$ be an $H$-cube, and $K\Delta_j$ denote the $K$-dilation of $\Delta_j$ with the center of dilation at its center for $K >0$. We decompose $f$ into  $\{f_{(l,v)} : (l,v) \in \mathcal P_{H} \times \mathcal V_{H^{-1}}\}$.  
By using (i) of Lemma \ref{lem:wavepacket},\begin{multline*}
\| U f \|_{L_x^q(B_R; L_t^r(I_{H^2}) )}  
\le
\Big(  \sum_{j}  \Big\| \sum_{ (l,v) \in \mathcal W_j(H^{\epsilon})}  Uf_{(l,v)} \Big\|_{L_x^q(B_R; L_t^r(I_j))}^s  \Big)^{1/s} 
+ C_{M} H^{-M} \|f\|_{L^2(\mathbb R^n)}
\end{multline*}
for any $M \ge 1$, where  
\[
\mathcal W_j(H^{\epsilon}) := \{(l,v) \in \mathcal P_{H} \times \mathcal V_{H^{-1}}: T_{(l,v)} \cap H^{\epsilon}\Delta_j \neq \emptyset \}.
\]
Since \eqref{Kaloc} is translation invariant, by the induction hypothesis and embedding $\ell^2 \subset \ell^s$, 
\begin{equation*}
\Big(  \sum_{j}  \Big\| \sum_{ (l,v) \in \mathcal W_j(H^{\epsilon})}  Uf_{(l,v)} \Big\|_{L_x^q(B_R; L_t^r(I_j))}^s  \Big)^{1/s} 
\le
C_{\epsilon} 
H^{\epsilon} A(R)   \Big( \sum_{j} \Big\|  \sum_{ (l,v) \in \mathcal W_j( H^{\epsilon})} f_{(l,v)} \Big\|_{L^2(\mathbb R^n)}^2 \Big)^{1/2}.
\end{equation*}
By (iii) in Lemma \ref{lem:wavepacket},
\begin{align*}
\sum_{j} \Big\|  \sum_{ (l,v) \in \mathcal W_j(H^{\epsilon})} f_{(l,v)} \Big\|_{L^2(\mathbb R^n)}^2 
&\le C \sum_{j} \sum_{v} \sum_{l : T_{(l,v)} \cap  H^{\epsilon} \Delta_j \neq \emptyset} \| f_{(l,v)} \| _{L^2(\mathbb R^n)}^2.
\end{align*}
By rearranging the summations, the  right side of the above inequality equals 
\[
C\sum_{l} \sum_{v} \sum_{j : T_{(l,v)} \cap  H^{\epsilon} \Delta_j \neq \emptyset} \| f_{(l,v)} \| _{L^2(\mathbb R^n)}^2.
\]
\begin{figure}[htbp]
\begin{center}
\begin{tikzpicture}
\draw [thick] (1/8,3) -- (1/8,-3) -- (-1/8,-3) -- (-1/8,3) -- (1/8,3); 
\draw (-1.3,-2.7) node {$I_{H^2} \times B_R$};
\draw [dashed,semithick](1/1.8,1/1.8) -- (1/1.8,-1/1.8) -- (-1/1.8,-1/1.8) -- (-1/1.8,1/1.8) -- (1/1.8,1/1.8); 
\draw (-0.8 ,-0.2) node {$\Delta_j$};
\draw (3.5,-1) node {$T_{(l,v)}$};
\draw[shift={(0,1/2)},rotate=40]  (1/2,3.5) -- (1/2,-3.5) -- (-1/2,-3.5) -- (-1/2,3.5) -- (1/2,3.5);
\draw[shift={(0,1/2)},rotate=80]  (1/2,3.5) -- (1/2,-3.5) -- (-1/2,-3.5) -- (-1/2,3.5) -- (1/2,3.5);
\draw[shift={(0,-1/2)},rotate=60]  (1/2,3.5) -- (1/2,-3.5) -- (-1/2,-3.5) -- (-1/2,3.5) -- (1/2,3.5);
\end{tikzpicture}
\caption{}
\label{default}
\end{center}
\end{figure}

From the first condition of \eqref{homog_cond} it follows that the angle formed by the vector $(-1,\nabla \Phi(\xi))$ and the $t$-axis is lager than some positive constant. Thus, 
if $l$ and $v$ are given then 
the number of $j$ with $T_{(l,v)} \cap \Delta_j \neq \emptyset$ is $O(1)$, see Figure 1. By this observation the above equation is
\begin{align*}
&\le C  H^{C\epsilon} \sum_l \sum_{v} \|  f_{(l,v)} \|_{L^2(\mathbb R^n)}^2,
\end{align*}
and by (ii) of Lemma \ref{lem:wavepacket} it equals 
\[
C  H^{C\epsilon} \|  f \|_{L^2(\mathbb R^n)}^2.
\]
By combining all the above equations we obtain \eqref{Kaloc} with $I_{H^2}$ replacing $I_H$.

\smallskip

Consider (ii). The proof is similar to that of (i). Here, the condition $q \ge 2$ is not required because it is used for embedding $\ell^2 \subset \ell^q$ in the proof of $\mathrm{(i)}$. Since we easily have 
\[
\| U f \|_{L_t^r(I_{H^2}; L_x^q(B_R) )} \le 
\Big(  \sum_{j} \| Uf \|_{L_t^r(I_j; L_x^q(B_R) )}^r  \Big)^{1/r},
\]
we need only the condition $r \ge 2$ for embedding.  
\end{proof}


\subsection{Extension an arbitrary finite interval $I_H$ to $\mathbb R$}
The $H^\epsilon$-loss in the right side of \eqref{eqGoal} is obstructing to have the global-in-time estimates. If \eqref{eqGoal} is uniformly bounded for $H$ then one can have the global-in-time estimates by limiting $H \to \infty$. By virtue of dispersive properties such as \eqref{eqn:fourmea} below it is possible to eliminate the $H^\epsilon$-loss in exchange for some integrability.

\begin{prop} \label{prop:epRev} 
Let $2 \le q,r  < \infty$ and $R \ge 1$.
\begin{itemize}
\item[(i)]
Suppose that for any $H \ge 1$ the estimate \eqref{eqGoal} holds for all $f \in L^2(\mathbb R^n)$ and all $\epsilon>0$. Then for any $\tilde r >r$, the estimate
\eqref{eqn:gg} holds for all $f \in L^2(\mathbb R^n)$.
\item[(ii)]
If the estimate \eqref{eqGoal} with $L_t^rL_x^q$ in replacement of  $L_x^qL_t^r$ is valid for all $f \in L^2(\mathbb R^n)$ and all $\epsilon>0$ then for any $\tilde r > r$, the estimate \eqref{eqn:gg2} holds for all $f \in L^2(\mathbb R^n)$ and all $\epsilon>0$.
\end{itemize}
\end{prop}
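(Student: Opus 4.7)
The plan is to pass from the local-in-time estimate \eqref{eqGoal} to the global one \eqref{eqn:gg} via a dyadic-in-time decomposition, combined with an epsilon-removal argument that converts the slack $\tilde r > r$ into the needed summability.

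First I would decompose $\mathbb R = \bigsqcup_{k \in \mathbb Z} J_k$ into disjoint dyadic intervals with $J_k \subset I_{2^{|k|}}$. Applying the hypothesis \eqref{eqGoal} on each $J_k$ gives
\[
\|Uf\|_{L^q_x L^r_t(B_R \times J_k)} \le C_\epsilon\, 2^{|k|\epsilon} A(R)\,\|f\|_{L^2}.
\]
Since $\varphi$ is compactly supported, we also have the uniform pointwise bound $\|Uf\|_{L^\infty_{t,x}} \le \|\hat f\,\varphi\|_{L^1} \le C\|f\|_{L^2}$, hence $\|Uf\|_{L^q_x L^\infty_t(B_R \times \mathbb R)} \le C R^{n/q}\|f\|_{L^2}$. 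Interpolating by H\"older in $t$ with exponents $r/\tilde r$ and $(\tilde r - r)/\tilde r$, and then by H\"older in $x$ with exponents $\tilde r/r$ and $\tilde r/(\tilde r - r)$, yields on each $J_k$ the per-piece bound
\[
\|Uf\|_{L^q_x L^{\tilde r}_t(B_R \times J_k)} \le C_\epsilon\, 2^{|k|\epsilon r/\tilde r}\, A(R)^{r/\tilde r}\, R^{n(\tilde r - r)/(q\tilde r)}\,\|f\|_{L^2}.
\]

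The crux is the summation over $k$. A direct summation in $\ell^{\min(q,\tilde r)}$ fails because the factor $2^{|k|\epsilon r/\tilde r}$ is not summable. To gain summability I would invoke an epsilon-removal argument in the spirit of Tao's argument for Fourier restriction: first upgrade \eqref{eqGoal} from intervals $I_H$ to arbitrary measurable time sets $E$ of measure $\le H$ with the same $|E|^\epsilon$ constant — this uses a covering of $E$ by translates of $I_H$ together with the wave-packet almost-orthogonality of Lemma \ref{lem:wavepacket}(iii) to prevent combinatorial blow-up — and then apply real (Marcinkiewicz) interpolation between two such estimates at $r < \tilde r_0 < \tilde r_1$. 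The strict inequality $\tilde r_j > r$ provides exactly the slack needed to absorb the $\epsilon$-loss into a fixed gain in the exponent, and the spatial factor $R^{n(1/r - 1/\tilde r)} = |B_R|^{1/r - 1/\tilde r}$ emerges from H\"older's inequality on $B_R$ at the interpolation step.

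Part (ii) follows by the same argument with $L^r_t L^q_x$ in place of $L^q_x L^r_t$. The additional $R^\epsilon$-loss in \eqref{eqn:gg2} arises from a Minkowski swap between $L^r_t$ and $L^q_x$ needed to put the hypothesis in the correct order for the dyadic-in-time machinery, a swap which costs $R^\epsilon$ unless $r = q$. The main obstacle I expect is the passage from intervals to general measurable sets with the clean $|E|^\epsilon$ constant; this is where wave-packet orthogonality is essential to ensure the constant does not acquire a polynomial loss under the covering.
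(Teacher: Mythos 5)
The decisive step in your outline --- upgrading \eqref{eqGoal} from time intervals $I_H$ to arbitrary measurable sets $E$ with a clean $|E|^{\epsilon}$ constant ``using a covering of $E$ by translates of $I_H$ together with the wave-packet almost-orthogonality of Lemma \ref{lem:wavepacket}(iii)'' --- does not work as described, and this is precisely where the real content of the proposition lies. If you cover $E$ by $N \sim |E|/H$ translates of $I_H$ and apply the hypothesis on each piece, the full datum $f$ appears in every term, so the triangle (or $\ell^s$) inequality produces a factor polynomial in $N$, hence polynomial in $|E|$, not $|E|^{\epsilon}$. Lemma \ref{lem:wavepacket}(iii) cannot rescue this: it is an $L^2$ orthogonality statement about the \emph{initial data} pieces $f_{(l,v)}$, and a single wave packet tube $T_{(l,v)}$ passes through \emph{all} of the time intervals, so there is no way to assign disjoint sub-collections of packets to the different pieces of the cover. (The paper does use (iii) for exactly this kind of bookkeeping, but only in Proposition \ref{prop:extIH}, where the transversality of the tubes to the $t$-axis guarantees each tube meets $O(1)$ of the cubes $\Delta_j$; nothing analogous is available for a cover of all of $\mathbb R$.)

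The mechanism the paper actually uses is different in kind: it dualizes to a restriction estimate $\|\mathfrak Rf\|_{L^2(d\sigma)} \lesssim H^{\epsilon}A(R)\|f\|$ and runs Tao's epsilon-removal, whose engine is the stationary-phase decay $|\widehat{d\sigma}(\xi)| \lesssim (1+|\xi|)^{-n/2}$ of the surface measure (equation \eqref{eqn:fourmea}). On the dual side, the cross terms between pieces of $f$ supported on $(N,H)$-\emph{sparse} balls are controlled by $\widehat{d\sigma}$ evaluated at the separation $(NH)^{\gamma}$ (Lemma \ref{lem:spase_decp}), and Lemma \ref{lem:DecE} decomposes an arbitrary set into $K$ pieces each made of sparse collections, with the loss $|E|^{1/K}$ that is then absorbed by the slack $\tilde r > r$ via restricted-type estimates and interpolation. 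Your final interpolation step is in the right spirit, but without the dispersive decay of $d\sigma$ (or some genuine substitute encoding long-time dispersion) the $H^{\epsilon}$-loss cannot be traded for integrability; this is exactly what the paper flags before stating Proposition \ref{prop:epRev}. A smaller point: in part (ii) the extra $R^{\epsilon}$ in \eqref{eqn:gg2} comes in the paper's argument from the crude bound $h \le CR^{n}$ on the fiber length in the $L_t^{r}L_x^{q}$ bookkeeping, not from a Minkowski swap between the two norms.
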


The proof of proposition will be given in next section. We can easily see that the combination of Proposition \ref{prop:epRev} and Proposition \ref{prop:extIH} gives Theorem \ref{thm:G2}.

\section{Proof of Proposition \ref{prop:epRev}}
We adapt \textit{Tao's epsilon removal} arguments in \cite{tao2000Bochner}. First we rewrite Proposition \ref{prop:epRev} by using duality.
By Plancherel's theorem, we may replace $Uf$ with $U\hat f$ in \eqref{eqGoal}, which may be viewed as the adjoint operator of the Fourier restriction operator $\mathfrak Rf = \hat f \big|_S$ for a compact surface $S = \{ (\Phi(\xi), \xi) \in \mathbb R \times \mathbb R^n : \xi \in \Pi\}$ where $\Pi$ is as in \eqref{projS}. By duality, Proposition \ref{prop:epRev} is equivalent to the following:

\begin{prop} \label{thm:epsilonRem}
Let $1 < q,r \le 2$ and  $R \ge 1$. Let $d\sigma$ be the induced Lebesgue measure of the compact surface $S$.
\begin{itemize}
\item[(i)]
Suppose that for any $H \ge 1$,
\begin{equation} \label{eqn:restB}
\|\mathfrak Rf\|_{L^2(d\sigma)} \le C_\epsilon H^{\epsilon} A(R) \|f\|_{L_{x}^{q}(B_R;L_{t}^{r}(I_H))}
\end{equation}
for all intervals $I_H$ of length $H$, all $f \in L_{x}^{q}(B_R;L_{t}^{r}(I_H))$ and all $\epsilon >0$. Then for $1 \le \tilde r<r$, 
\begin{equation} \label{eqn:Glrest}
\|\mathfrak Rf\|_{L^2(d\sigma)} \le C_{\tilde r} 
R^{n(\frac{1}{\tilde r} - \frac{1}{r})} A(R) \|f\|_{L_{x}^{q}(B_R;L_{t}^{\tilde r}(\mathbb R))}
\end{equation}
for all $f \in {L_{x}^{q}(B_R;L_{t}^{\tilde r}(\mathbb R))}$.
\item[(ii)]
Suppose that the estimate \eqref{eqn:restB} with $L_t^rL_x^q$ in replacement of  $L_x^qL_t^r$ holds for all intervals $I_H$, all $f \in L_{t}^{r}(I_H;L_{x}^{q}(B_R))$ and all $\epsilon >0$. Then for $1 \le \tilde r < r$, 
\begin{equation*} 
\|\mathfrak Rf\|_{L^2(d\sigma)} \le C_{\tilde r,\delta} 
R^{n(\frac{1}{\tilde r} - \frac{1}{r})+\delta} A(R) \|f\|_{L_{t}^{\tilde r}(\mathbb R;L_{x}^{q}(B_R))}
\end{equation*}
for all $f \in L_{t}^{\tilde r}(\mathbb R;L_{x}^{q}(B_R))$ and all $\delta>0$.
\end{itemize}
\end{prop}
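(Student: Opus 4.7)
The plan is to adapt Tao's $\epsilon$-removal scheme from \cite{tao2000Bochner}, with time intervals of length $H$ playing the role taken by spatial balls in Tao's original argument. I will detail part~(i); part~(ii) is handled in parallel by interchanging the order of the mixed norm at every step. The starting point is the $TT^*$ identity
\begin{equation*}
\|\mathfrak R f\|_{L^2(d\sigma)}^2 = \langle K*f,\,f\rangle, \qquad K := \widehat{d\sigma},
\end{equation*}
together with the pointwise dispersive decay of $K$ coming from stationary phase and the homogeneity~\eqref{homog_cond}: schematically, $|K(t,x)| \lesssim (1+|t|)^{-\mu}$ for some $\mu>0$ on the ``light-cone'' region $\{|x|\lesssim|t|\}$ (with an $O(1)$ bound otherwise).

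After normalizing $\|f\|_{L^q_x(B_R;L^{\tilde r}_t(\mathbb R))}=1$ and tiling $\mathbb R$ by unit intervals $\{I_\alpha\}$, I will dyadically decompose $f$ into layers supported on those intervals $\alpha\in\mathcal J_k$ with $\|f\chi_{I_\alpha\times B_R}\|_{L^q_x L^{\tilde r}_t}\sim 2^{-k}$, so that by Chebyshev $|\mathcal J_k|\lesssim 2^{k\tilde r}$. For each $k$ I extract a maximal $N_k$-separated subfamily $\mathcal J^*_k\subset\mathcal J_k$; since $\mathcal J_k$ is covered by $O(N_k)$ translates of $\mathcal J^*_k$, it suffices to estimate the sparse piece and accept a factor of $N_k$. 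Writing $f^*_k=\sum_{\alpha\in\mathcal J^*_k}g_\alpha$ with $g_\alpha := f\chi_{I_\alpha\times B_R}$ and expanding $\|\mathfrak R f^*_k\|_{L^2(d\sigma)}^2$ into diagonal and off-diagonal, I will treat the diagonal with the hypothesis~\eqref{eqn:restB} applied at the unit scale $H=1$ (so the $H^\epsilon$-loss is trivial), yielding $\lesssim A(R)^2 |\mathcal J^*_k|\,2^{-2k}$; an auxiliary H\"older on $B_R$ is responsible for the factor $R^{n(1/\tilde r-1/r)}$ appearing in~\eqref{eqn:Glrest}. The off-diagonal terms exploit the separation $\ge N_k$ together with the decay of $K$: Schur's lemma, combined where needed with~\eqref{eqn:restB} at the scale $H=N_k$ with a sufficiently small $\epsilon$, yields a gain $\sim N_k^{-\mu}$ over the trivial bound.

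Finally, I will choose $N_k = 2^{\eta k}$ for a small $\eta>0$, so that the translate penalty $N_k$ is absorbed by a dyadic gain of order $2^{-k(r-\tilde r)/r}$ coming from the mismatch of exponents; summing the resulting geometric series in $k$ delivers~\eqref{eqn:Glrest} without any residual $\epsilon$-loss. The hard part will be this balancing step: three powers of $N_k$ — from the translate cover, from the kernel decay, and from the $\epsilon$-loss in~\eqref{eqn:restB} at the scale $H=N_k$ — must simultaneously fit into a single summable series. The strict inequality $\tilde r<r$ is the sole source of the slack needed to carry this out, and is precisely why the argument cannot reach the endpoint $\tilde r=r$.
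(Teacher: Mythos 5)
Your high-level plan (adapt Tao's $\epsilon$-removal, with time intervals/slabs in place of spatial balls) is the right one and matches the paper's strategy, but as written the argument has two genuine gaps, either of which is fatal.

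First, the diagonal step does not close. You pigeonhole the unit slabs by $\|f\chi_{I_\alpha\times B_R}\|_{L^q_xL^{\tilde r}_t}\sim 2^{-k}$, but the hypothesis \eqref{eqn:restB} at scale $H=1$ bounds $\|\mathfrak R g_\alpha\|_{L^2(d\sigma)}$ by $A(R)\|g_\alpha\|_{L^q_xL^{r}_t}$, and on a unit time interval H\"older gives $\|g_\alpha\|_{L^{\tilde r}_t}\le\|g_\alpha\|_{L^{r}_t}$ (since $\tilde r<r$) --- the wrong direction. So the quantity you have pigeonholed does not control the quantity the hypothesis needs, and the claimed diagonal bound $A(R)^2|\mathcal J_k^*|2^{-2k}$ is unjustified for general $f$. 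This is exactly why the paper first reduces, via real interpolation for mixed Lorentz norms (citing Fernandez), to the restricted-type estimate \eqref{weaktype} for $f=\chi_E$: for a characteristic function the $L^r$ and $L^{\tilde r}$ norms over a fiber of measure $\sim h$ are the explicit powers $h^{1/r}$ and $h^{1/\tilde r}$, the comparison goes the right way, and the slack $h^{1/\tilde r-1/r}$ is precisely where $\tilde r<r$ enters. Your scheme needs either this reduction or some substitute for it.

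Second, the sparseness bookkeeping is too weak. A maximal $N_k$-separated subfamily of $\mathcal J_k$ with $N_k=2^{\eta k}$, $\eta$ small, can still contain up to $|\mathcal J_k|\lesssim 2^{k\tilde r}$ intervals. The off-diagonal sum then has $|\mathcal J_k^*|^2$ terms, and a gain of $N_k^{-\mu}$ per term leaves you with $|\mathcal J_k^*|\,N_k^{-\mu}$ times the diagonal, which is enormous unless $N_k\gtrsim|\mathcal J_k^*|^{1/\mu}$ --- that is, the separation must grow \emph{polynomially in the cardinality} of the family, which is Tao's sparseness condition (separation $(NH)^\gamma$ in Lemma \ref{lem:spase_decp}, with $\gamma=n/\rho$). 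But once you impose that, the ``covered by $O(N_k)$ translates'' penalty becomes $N_k\gtrsim 2^{k\tilde r/\mu}$, which cannot be absorbed by a gain of order $2^{-k(r-\tilde r)/r}$ when $\tilde r$ is close to $r$. This tension is exactly what forces the multi-scale iterated decomposition of Lemma \ref{lem:DecE} (radii $|E|^{\gamma^{k-1}}$ growing double-exponentially, $O(|E|^{1/K})$ sparse subcollections at each stage, $K\sim\log(1/\epsilon)$ so that the covering loss $|E|^{1/K}$ and the hypothesis loss $(|E|^{\gamma^K})^\epsilon$ are both $o(|E|^\delta)$). A single-scale greedy extraction with slowly growing separation cannot replace it. Until both points are repaired, the claim that the geometric series in $k$ sums is not substantiated.
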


The proposition will be shown through two steps.

\subsection{Extension to sparse balls}
We define a sparse collection of balls. Recall that the Fourier transform of the surface measure $d\sigma$ has a decay estimate   
\begin{equation} \label{eqn:fourmea}
|\widehat{d\sigma}(\xi)| \le C(1+|\xi|)^{-\rho}, \qquad \xi \in \mathbb R^{n+1},
\end{equation}
where  $\rho=n/2$.
We set an exponent
\begin{equation} \label{def:gam}
\gamma := n/\rho =2.
\end{equation}
\begin{defn}
A collection $\{B(z_i,H)\}_{i=1}^{N}$ of $H$-balls is called $(N,H)$-\textit{sparse} if the centers $z_i$ are $(NH)^{\gamma}$ separated.
\end{defn}

Let $\phi$ be a radial Schwartz function that is positive on the ball $B(0,2/3)$, and whose Fourier transform is supported on the ball $B(0,3/2)$. 
For an $(N,H)$-sparse collection  $\{B(z_i,H)\}_{i=1}^{N}$,
we consider the corresponding collection $\big\{f_i \ast \hat \phi_i \big|_S \big\}_{i=1}^{N}$ of restricted functions to $S$ where $\phi_i(z) :=\phi(H^{-1}(z-z_i))$. 

\begin{lem}[\cite{tao2000Bochner}*{In the proof of Lemma 3.2}]  \label{lem:spase_decp}
If $1 \le p \le 2$ then
\begin{equation} \label{eqn:dep}
\Big\| \sum_{i=1}^{N} f_i \ast \hat \phi_i \big|_S  \Big\|_p \le CH^{1/p} \Big( \sum_{i=1}^{N} \|f_i\|_p^p \Big)^{1/p}
\end{equation}
for all $f_i \in L^p(\mathbb R^{n+1})$, where the constant $C$ is independent of $N$.
\end{lem}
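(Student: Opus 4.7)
The plan is to prove the estimate by complex interpolation between the endpoints $p=1$ and $p=2$. Setting $F_i := f_i \ast \hat\phi_i$, note that
$\hat\phi_i(x) = e^{-iz_i\cdot x}H^{n+1}\hat\phi(Hx)$
is a modulated Schwartz bump of scale $H^{-1}$, so that by taking one more Fourier transform the function $\hat F_i$ is essentially concentrated in the ball $B(-z_i,CH)$. Since $\{B(z_i,H)\}$ is $(N,H)$-sparse, we have $|z_i-z_j|\ge (NH)^\gamma$ for $i\ne j$, which means the Fourier supports of the $\hat F_i$ are essentially disjoint.

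For the $p=1$ endpoint, I would simply apply the triangle inequality on $L^1(d\sigma)$: writing $|F_i(x)|\le (|f_i|\ast |\hat\phi_i|)(x)$ and integrating in $x\in S$, the key pointwise bound is $\|\,|\hat\phi_i|(\cdot-y)\,\|_{L^1(d\sigma)}\le CH$, which follows from the Schwartz decay of $\hat\phi$ combined with the surface-measure estimate $|S\cap B(y,H^{-1})|\lesssim H^{-n}$. This yields
$\|\sum_i F_i|_S\|_{L^1(d\sigma)}\le CH\sum_i\|f_i\|_1$,
no sparsity needed.

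For the $p=2$ endpoint I would expand
\[
\Big\|\sum_i F_i\Big|_S\Big\|_{L^2(d\sigma)}^2=\sum_i\|F_i\|_{L^2(d\sigma)}^2+\sum_{i\ne j}\int_S F_i\bar F_j\,d\sigma.
\]
The diagonal is bounded by $CH\sum_i\|f_i\|_2^2$ by a variant of the computation in the $p=1$ step. For the off-diagonal, I would use Parseval to rewrite $\int_S F_i\bar F_j\,d\sigma$ as the pairing $\langle \widehat{F_i\bar F_j},\widehat{d\sigma}\rangle$. Because $\hat F_i$ and $\hat F_j$ are concentrated near $-z_i$ and $-z_j$, the convolution $\widehat{F_i\bar F_j}$ concentrates near $z_i-z_j$, a point of norm at least $(NH)^\gamma$. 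Using the decay bound \eqref{eqn:fourmea} at scale $(NH)^\gamma$, together with $\gamma\rho=n$, gives
\[
\Big|\int_S F_i\bar F_j\,d\sigma\Big|\le C(NH)^{-n}\cdot(\text{size factor})\cdot\|f_i\|_2\|f_j\|_2.
\]
When summed over the $N^2$ pairs and combined with Cauchy–Schwarz on $\sum_{i,j}\|f_i\|_2\|f_j\|_2\le N\sum_i\|f_i\|_2^2$, the $(NH)^{-n}$ decay absorbs the $N$ pair-count growth (here the choice $\gamma=n/\rho$ is precisely calibrated), yielding an off-diagonal contribution also $\le CH\sum_i\|f_i\|_2^2$, with constant independent of $N$.

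Finally, the linear map $(f_i)_{i=1}^N\mapsto \sum_i F_i|_S$ from $\ell^p(L^p(\mathbb R^{n+1}))$ to $L^p(d\sigma)$ has been shown to satisfy the claimed bound at $p=1$ and $p=2$, so Riesz–Thorin interpolation delivers the inequality for every intermediate $p\in[1,2]$. The main obstacle is the $p=2$ step: one must carefully exploit the \emph{exact} balance $\gamma\rho=n$ so that the curvature-driven decay of $\widehat{d\sigma}$ beats the quadratic pair count, and one must handle the fact that the Schwartz tails of $\hat\phi$ prevent the Fourier supports of the $F_i$ from being truly disjoint — this is absorbed by the rapid decay of $\hat\phi$ at the cost of universal constants.
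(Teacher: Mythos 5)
Your proposal is correct and follows essentially the same route as the paper: interpolation between $p=1$ (triangle inequality plus the kernel bound $\|\hat\phi_i(\cdot-y)\|_{L^1(d\sigma)}\le CH$) and $p=2$ (diagonal/off-diagonal expansion, with Parseval and the decay \eqref{eqn:fourmea} at distance $(NH)^{\gamma}$ beating the $N$ from Cauchy--Schwarz precisely because $\gamma\rho=n$). The only detail left implicit is your ``size factor,'' which the paper makes explicit as $\|\hat f_i\phi_i^{1/2}\|_1\le CH^{(n+1)/2}\|f_i\|_2$, the splitting $\phi_i=\phi_i^{1/2}\cdot\phi_i^{1/2}$ being the device that absorbs the Schwartz tails you mention.
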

\begin{proof}
By interpolation it suffices to show \eqref{eqn:dep} for $p=1$ and $p=2$. For $p=1$, the estimate is obtained by the triangle inequality and Fubini's theorem. For $p=2$ we write the left side of \eqref{eqn:dep} as
\begin{align*}
\Big\| \sum_{i} f_i \ast \hat \phi_i \big|_{S} \Big\|_2^2 
&= \sum_{i} \| f_i \ast \hat \phi_i \big|_{S} \|_2^2
+ \sum_{i \neq j} \int f_i \ast \hat \phi_i  \overline{f_j \ast \hat \phi_j} d\sigma \\
&=: (\mathrm I) + (\mathrm{II}).
\end{align*}

Using Schur's lemma we can obtain $\| f_i \ast \hat \phi_i \big|_{S} \|_2  \le C H^{1/2} \|f_i\|_2 $. Thus,
\begin{equation} \label{diag}
(\mathrm{I})  \le C H \sum_{i=1}^{N}\|f_i\|_2^2.
\end{equation}
Consider (II). By Parseval's identity,
\[
 \int f_i \ast \hat \phi_i  \overline{f_j \ast \hat \phi_j} d\sigma 
= 
\int \hat f_i \phi_i \overline{\hat f_j \phi_j  \ast \widehat{d\sigma}}.
\]
The right side of the above equation is bounded by 
\[
\big\|\hat f_i \phi_i^{1/2} \big\|_1 \big\|\hat f_j \phi_j^{1/2} \big\|_1\sup_{x,y} \big| \phi_i^{1/2}(x)  \phi_j^{1/2}(y)   \widehat{d\sigma}(x-y) \big|.
\]
By the Cauchy-Schwarz inequality and Plancherel's theorem,
\[
\|\hat f_i \phi_i^{1/2} \|_{1} \le C H^{\frac{n+1}{2}} \|f_i\|_{2}.
\]
Since the collection of $B(z_i,H)$ is $(N,H)$-sparse, we have  $|z_i-z_j| \ge (NH)^{\gamma}$ for $i \neq j$. By \eqref{eqn:fourmea} it implies 
\[
\sup_{x,y} \big| \phi_i^{1/2}(x)  \phi_j^{1/2}(y)   \widehat{d\sigma}(x-y) \big| \le C(NH)^{-\gamma \rho} = C(NH)^{-n}.
\]
We combine these estimates. Then,
\begin{align*}
(\mathrm{II})
\le CH N^{-n }\sum_{i=1}^{N}\sum_{j=1}^{N} \| f_i\|_{2} \| f_j\|_{2}. 
\end{align*}
By the Cauchy-Schwarz inequality, 
\[
\mathrm{(II)} \le CH N^{-n+1} \sum_{i=1}^{N} \| f_i\|_{2}^2.
\]
Therefore, from this and \eqref{diag}  we have \eqref{eqn:dep} for $p=2$.
\end{proof}

Using Lemma \ref{lem:spase_decp} we can obtain the restriction estimates \eqref{eqn:restB}  for functions $f$ supported in $(N,H)$-sparse balls. 

\begin{lem} \label{lem:sparseEst}
Let $1 \le q,r \le 2$, $R \ge 1$ and $H \ge 1$. Let $\{B_i\}_{i=1}^{N}$ be an $(N,H)$-sparse collection of $H$-balls.
\begin{itemize}
\item[(i)]
Suppose that  the estimate \eqref{eqn:restB} holds for all intervals $I_H$ of length $H$, all $f \in L_{x}^{q}(B_R;L_{t}^{r}(I_H))$ and all $\epsilon>0$. Then, the estimate
\[
\|\mathfrak Rf\|_{L^2(d\sigma)} \le C_{\epsilon} H^{\epsilon}A(R) \|f\|_{L_{x}^{q}(B_R;L_{t}^{ r}(\mathbb R))}
\]
holds for all $f$ that is supported in $\cup_{i=1}^{N} B_i$ and all $\epsilon>0$.
\item[(ii)]
The statement $\mathrm{(i)}$ is still valid even if $L_t^rL_x^q$ replaces $L_x^qL_t^r$.
\end{itemize}
\end{lem}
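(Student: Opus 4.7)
The plan is to run a $TT^\ast$ argument that plays the sparseness $|z_i - z_j| \ge (NH)^\gamma$ off against the decay \eqref{eqn:fourmea} of $\widehat{d\sigma}$, in the spirit of Tao's $\epsilon$-removal. I decompose $f = \sum_i f_i$ with $f_i := f\mathbf{1}_{B_i}$, so each $f_i$ is supported in a single $H$-ball and in particular its time-support lies in some $H$-interval $I_i$, making the single-interval hypothesis \eqref{eqn:restB} directly applicable to $f_i$. Starting from the identity
\begin{equation*}
\|\mathfrak R f\|_{L^2(d\sigma)}^2 = \sum_i \|\mathfrak R f_i\|_{L^2(d\sigma)}^2 + \sum_{i \neq j}\langle f_i, f_j \ast \widehat{d\sigma}\rangle,
\end{equation*}
the diagonal is controlled term-by-term by \eqref{eqn:restB}, giving $\sum_i \|\mathfrak R f_i\|_{L^2(d\sigma)}^2 \le C_\epsilon^2 H^{2\epsilon} A(R)^2 \sum_i \|f_i\|_{L_x^qL_t^r(I_i)}^2$.

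For the off-diagonal, the sparseness forces $|x-y| \gtrsim (NH)^\gamma$ whenever $x \in B_i$ and $y \in B_j$ with $i \neq j$, so by \eqref{eqn:fourmea} and the calibration $\gamma = n/\rho$ made in \eqref{def:gam} we get $|\widehat{d\sigma}(x-y)| \le C(NH)^{-\gamma\rho} = C(NH)^{-n}$, whence
\begin{equation*}
\Big|\sum_{i \neq j}\langle f_i, f_j \ast \widehat{d\sigma}\rangle\Big| \le C(NH)^{-n} \|f\|_1^2.
\end{equation*}
I would then pass from $\|f\|_1$ back to $\|f\|_{L_x^qL_t^r(\mathbb R)}$ by Hölder: because for each fixed $x \in B_R$ only a bounded number of the $B_i$'s contribute (as shown below), the time-support of $f(x,\cdot)$ has measure $\lesssim H$, and Hölder's inequality yields $\|f\|_1 \lesssim H^{1-1/r}R^{n(1-1/q)}\|f\|_{L_x^qL_t^r}$. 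The $(NH)^{-n}$ gain from sparseness is precisely what is needed to swallow this polynomial loss into a multiple of $C_\epsilon^2 H^{2\epsilon}A(R)^2\|f\|_{L_x^qL_t^r}^2$.

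It remains to compare $\sum_i \|f_i\|_{L_x^qL_t^r(I_i)}^2$ with $\|f\|_{L_x^qL_t^r(\mathbb R)}^2$. The key observation is that since $(NH)^\gamma \gg R$, any two $z_i, z_j$ whose spatial parts $y_i, y_j$ lie within $H$ of a common $x \in B_R$ must have $|\tau_i - \tau_j| \gtrsim (NH)^\gamma \gg H$, so the corresponding time-intervals $I_i$ are pairwise disjoint. The embedding $\ell^q \hookrightarrow \ell^2$ (valid since $q \le 2$) then yields $\sum_i \|f_i\|_{L_x^qL_t^r(I_i)}^2 \le \|f\|_{L_x^qL_t^r(\mathbb R)}^2$, completing (i). Part (ii) proceeds identically, with the outer $L_t^r$ norm interacting with the disjoint $I_i$'s via the identity $\sum_i \|f_i\|_{L_t^r(I_i)L_x^q}^r = \|f\|_{L_t^r(\mathbb R)L_x^q}^r$, and $\ell^r \hookrightarrow \ell^2$ (valid since $r \le 2$) closes the reconstruction.

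The main obstacle I foresee is bookkeeping in the off-diagonal absorption: one must verify that the $(NH)^{-n}$ factor genuinely dominates the polynomial Hölder loss for all admissible $R, H, N$, possibly at the cost of widening the $H^\epsilon$-loss in the final estimate. A secondary subtlety is the tacit use of $(NH)^\gamma \gg R$ to force spatial disjointness on $B_R$; in the opposite regime the spatial projections cannot spread out, and the single-interval hypothesis applies almost directly, so no genuine obstruction arises there.
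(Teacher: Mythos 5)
There is a genuine gap, and it sits exactly where your last paragraph worries: the off-diagonal absorption does not close. Because you run the $TT^{*}$ expansion directly on $f$ in the mixed norm, the cross terms are naturally measured in $\|f_i\|_{L^1}$ (or $\|f_i\|_{L^2}$), and for $q,r\le 2$ H\"older on a set of fixed measure only gives $\|f_i\|_{L_x^qL_t^r}\lesssim H^{\cdots}\|f_i\|_{L^2}$ --- the \emph{wrong} direction for converting the cross terms into the target norm. Concretely: (a) the claim that only $O(1)$ balls sit over each $x\in B_R$ is false, since sparseness separates the centers $z_i$ in \emph{spacetime}; up to $N$ balls may share the same spatial projection with time-coordinates $(NH)^{\gamma}$-separated, so the time-support of $f(x,\cdot)$ can have measure $\sim NH$ and the honest H\"older bound is $\|f\|_{1}\le C(NH)^{1-1/r}R^{n(1-1/q)}\|f\|_{L_x^qL_t^r}$. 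The off-diagonal is then at best $C(NH)^{2-2/r-n}R^{2n(1-1/q)}\|f\|_{L_x^qL_t^r}^{2}$. (b) For $n=1$, $r=2$ the factor $(NH)^{2-2/r-n}$ equals $1$, so sparseness yields no gain at all; and in every dimension the case $N=2$, $H=1$ leaves you needing $R^{2n(1-1/q)}\lesssim A(R)^{2}$ uniformly in $R\ge 1$, where $A(R)$ is a \emph{fixed} power of $R$ determined by $\alpha$ --- an unassumed constraint. So the polynomial loss is not "swallowed" into $C_\epsilon^{2}H^{2\epsilon}A(R)^{2}\|f\|^{2}$. A secondary, fixable point: the dichotomy ``$(NH)^{\gamma}\gg R$ or not'' is not available ($N,H,R$ are independent parameters), but the reconstruction $\sum_i\|f_i\|_{L_x^qL_t^r}^{2}\le\|f\|_{L_x^qL_t^r(\mathbb R)}^{2}$ needs only pairwise disjointness of the $B_i$ together with $\ell^{\min(q,r)}\subset\ell^{2}$ (and Minkowski), not disjoint time intervals.

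The paper avoids this by performing the sparseness-versus-decay computation entirely in $L^{2}$ on the Fourier-restriction side and invoking the local hypothesis only afterwards: one writes $\mathfrak Rf=\sum_i(\tilde{\mathfrak R}f_i)\ast\hat\phi_i\big|_S$ with $\tilde{\mathfrak R}f_i=\hat f_i\big|_{\mathcal N_{1/H}S}$, applies Lemma \ref{lem:spase_decp} (which is precisely your $TT^{*}$ computation, but with the cross terms controlled by $\|\hat f_i\phi_i^{1/2}\|_{1}\le CH^{(n+1)/2}\|f_i\|_{2}$ rather than by $\|f\|_{1}$) to get $\|\mathfrak Rf\|_{L^2(d\sigma)}\le CH^{1/2}\big(\sum_i\|\tilde{\mathfrak R}f_i\|_{2}^{2}\big)^{1/2}$ with no stray powers of $R$ or $N$, and only then uses \eqref{eqn:restB} in its thickened form $\|\tilde{\mathfrak R}f_i\|_{L^2(\mathcal N_{1/H}S)}\le C_\epsilon H^{-1/2+\epsilon}A(R)\|f_i\|_{L_x^qL_t^r(I_H)}$, so that the $H^{1/2}$ and $H^{-1/2}$ cancel exactly. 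Your diagonal treatment and the final $\ell^{2}$-reassembly survive essentially unchanged once they are attached to this two-step structure; it is the direct $L^{1}$-based off-diagonal estimate that must be replaced.
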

\begin{proof}
The statements (i) and (ii) are proven identically. So, we consider only (i).
If $f_i = \chi_{B_i} f$ then one has $\hat f_i = \hat f_i \ast \hat\phi_i$ where $\phi_i$ is defined as in Lemma \ref{lem:spase_decp}. Using this one has $\mathfrak R f_i = \hat f_i \big|_S = (\hat f_i \ast \hat \phi_i) \big|_S$. Here, since $\hat \phi_i$ is supported on the ball $B(0,1/H)$, we may restrict the support of $\hat f_i$ to a $O(1/H)$ neighborhood of the surface $S$, thus we may write as   
\[ 
\mathfrak R f_i  =  (\hat f_i \big|_{\mathcal N_{1/H} S} \ast \hat \phi_i) \big|_S
\]
where $\mathcal N_{1/H} S$ is a $O(1/H)$ neighborhood of the surface $S$. Let $\tilde{\mathfrak R}$ be a restriction operator defined by $\tilde{\mathfrak R} f = \hat f \big|_{\mathcal N_{1/H}S}$. If $f$ is supported in $\cup_{i} B_i$ then one has
\[
\mathfrak R f = \sum_{i=1}^{N} \tilde{\mathfrak R}f_i \ast \hat \phi_i \big|_{S}.
\]
By Lemma \ref{lem:spase_decp},
\[
\|\mathfrak R f \|_{L^2(d\sigma)} \le C H^{1/2} \Big( \sum_{i=1}^{N} \| \tilde{\mathfrak R} f_i \|_{L^2(\mathcal N_{1/H} S)}^2 \Big)^{1/2}.
\]
From \eqref{eqn:restB} it follows that
\[
\| \tilde{\mathfrak R} f \|_{L^2(\mathcal N_{1/H} S)} 
\le C_\epsilon H^{-1/2+\epsilon} A(R) \|f\|_{L_x^{q}(B_R;L_t^r(I_H))}.
\]
By applying this to the right side of the previous estimate, 
\[
\|\mathfrak R f \|_{L^2(d\sigma)} \le C_{\epsilon} H^{\epsilon} A(R) \Big( \sum_{i=1}^{N} \|f_i \|^2_{L_{x}^{q}(B_R;L_{t}^{r}(\mathbb R))} \Big)^{1/2}.
\]
If $1 \le r \le q \le 2$ then by embedding $\ell^r \subset \ell^{q} \subset \ell^{2}$,
\begin{align*}
\Big( \sum_{i=1}^{N} \|f_i \|^2_{L_{x}^{q}(B_R;L_{t}^{r}(\mathbb R))} \Big)^{1/2}
&\le \Big( \sum_{i=1}^{N} \|f_i \|^q_{L_{x}^{q}(B_R;L_{t}^{r}(\mathbb R))} \Big)^{1/q} \\
&=\Big(\int_{B_R}\sum_{i=1}^{N}\|f_i\|_{L_t^{r}(\mathbb R)}^{q} dx\Big)^{1/q} \\
&\le \Big(\int_{B_R} \Big(\sum_{i=1}^{N}\|f_i\|_{L_t^{r}(\mathbb R)}^{r} \Big)^{q/r} dx \Big)^{1/q} \\
&=\|f\|_{L_{x}^{q}(B_R;L_{t}^{ r}(\mathbb R))}.
\end{align*}
If $1 \le q \le r \le 2$ then by embedding $\ell^r \subset \ell^2$ and Minkowski's inequality,
\begin{align*}
\Big( \sum_{i=1}^{N} \|f_i \|^2_{L_{x}^{q}(B_R;L_{t}^{r}(\mathbb R))} \Big)^{1/2}
&\le \Big( \sum_{i=1}^{N} \|f_i \|^r_{L_{x}^{q}(B_R;L_{t}^{r}(\mathbb R))} \Big)^{1/r} \\
&\le \Big(\int_{B_R} \Big(\sum_{i=1}^{N}\|f_i\|_{L_t^{r}(\mathbb R)}^{r} \Big)^{q/r} dx\Big)^{1/q} \\
&=\|f\|_{L_{x}^{q}(B_R;L_{t}^{ r}(\mathbb R))}.
\end{align*}
Therefore, 
\[
\|\mathfrak R f \|_{L^2(d\sigma)} \le C_{\epsilon} H^{\epsilon} A(R) \|f\|_{L_{x}^{q}(B_R;L_{t}^{ r}(\mathbb R))}.
\]
\end{proof}

\subsection{Extension to $\mathbb R$}
We now remove the condition that a function $f$ is supported in the union of $(N,H)$-sparse balls in Lemma \ref{lem:sparseEst}. We utilize the following decomposition: 
\begin{lem} [\cite{tao2000Bochner}*{Lemma 3.3}]\label{lem:DecE} 
Suppose that $E$ is a finite union of cubes of sidelength $c \sim 1$. Then for any $K \ge 1$ there is a decomposition 
\[
E = \bigcup_{k=1}^{K} E_k
\]
such that each $E_k$ has $O(|E|^{1/K})$ number of  $(O(|E|), O(|E|^{\gamma^{k-1}}))$-sparse collections of $O(|E|^{\gamma^{k-1}})$-balls, and is covered by the balls in the union of these collections, where $\gamma$  defined as in \eqref{def:gam}.
\end{lem}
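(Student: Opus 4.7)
The plan is a greedy peeling argument at the geometric scales $H_k := |E|^{\gamma^{k-1}}$, $k = 1, \ldots, K$, applied to $E$ viewed as a disjoint union of $M := O(|E|)$ unit cubes.

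I would begin at the finest scale $H_1 = 1$ and try to cover the constituent cubes of $E$ by at most $M^{1/K}$ many $(O(M), H_1)$-sparse collections of unit balls; recall that the sparsity requirement at this scale asks the centers of the $O(M)$ balls in each collection to be $(MH_1)^{\gamma} = M^{\gamma}$-separated. If the greedy selection succeeds, I put the covered cubes into $E_1$ and stop building $E_1$. Whatever cubes are left over must concentrate in regions of diameter $\lesssim M^{\gamma} = H_{2}$, because the $M^{1/K}$ already-chosen sparse collections have exhausted the $M^{\gamma}$-separated skeletons of $E$. I then coalesce each residual region into a single representative cluster of scale $H_{2}$ and repeat the procedure one scale coarser: cover the clusters by at most $M^{1/K}$ many $(O(M), H_2)$-sparse collections of $H_2$-balls, put the covered portion into $E_2$, and push what remains to scale $H_3$, and so on.

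The arithmetic that makes the iteration self-consistent is the separation check
\[
(MH_k)^{\gamma} \;=\; M^{\gamma}\cdot M^{\gamma\cdot \gamma^{k-1}} \;\ge\; M^{\gamma^{k}} \;=\; H_{k+1},
\]
so the sparsity threshold at scale $k$ already exceeds the cluster scale $H_{k+1}$. Hence clusters inherited from stage $k$ are automatically $H_{k+1}$-separated for the purposes of the stage-$(k+1)$ greedy pass and can be treated as single points at the next scale. Since at each stage we either finish by producing $O(M^{1/K})$ sparse collections or shrink the number of active clusters by a factor $M^{1/K}$ (by absorbing uncovered ones into larger blobs), after $K$ stages the count of clusters is $\le M^{1-K/K}=1$ and the procedure terminates, yielding the decomposition $E = \bigcup_{k=1}^{K} E_k$ with the balls produced at stage $k$ furnishing the claimed $(O(|E|), O(|E|^{\gamma^{k-1}}))$-sparse collections of $O(|E|^{\gamma^{k-1}})$-balls.

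The main obstacle will be the precise pigeonhole bookkeeping at each stage: showing that failure of the stage-$k$ greedy covering by $M^{1/K}$ sparse collections really does force the uncovered cubes to merge into clusters of diameter $\lesssim H_{k+1}$, and controlling the implicit constants so that they do not accumulate with $K$. Once this dichotomy at each scale is in hand, the iteration and termination, as well as the verification of the sparsity bounds, are routine.
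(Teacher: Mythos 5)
Your multiscale greedy scheme has the right skeleton --- the scales are tied together through the sparsity separation $(MH_k)^{\gamma}$, and termination after $K$ stages comes from a quantity that improves by a factor $M^{1/K}$ per stage --- but the step you yourself flag as ``the main obstacle'' is a genuine gap, and the claim you propose to prove there is not the correct one. Failure of the stage-$k$ greedy covering does \emph{not} force the uncovered cubes to concentrate in regions of diameter $\lesssim H_{k+1}$. What maximality of each extracted sparse collection actually gives is that every leftover point $x$ lies within the separation radius of at least one extracted center from each of the $M^{1/K}$ rounds, hence of $\ge M^{1/K}$ distinct extracted centers. That is a \emph{local density increment} for $E$ near $x$, not a coalescence statement: the leftover can remain spread over arbitrarily many such high-density regions. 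Consequently the bookkeeping ``shrink the number of active clusters by a factor $M^{1/K}$'' does not go through, and the implicit step of multiplying a cluster count by a per-cluster mass is illegitimate, since clusters produced at consecutive scales need not be disjoint and their masses cannot simply be added.

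The repair is to track density rather than cluster count, which is essentially the argument of Tao that the paper invokes. One shows inductively that every point surviving stage $k$ satisfies $|E\cap B(x,H_{k+1})|\gtrsim M^{k/K}$ (the $\ge M^{1/K}$ extracted balls near $x$ are pairwise disjoint because their centers come from separated nets, so their masses add); then a maximal $H_{k+1}$-separated net of the leftover has $\lesssim M^{1-k/K}$ points by disjointness against the total mass $M$, and at the final stage this count is $\lesssim M^{1/K}$, so singleton collections finish the cover. The paper's proof sidesteps the greedy algorithm entirely by \emph{defining} $E_k$ as the set of unassigned points with $|E\cap B(x,H_k)|\le |E|^{k/K}$, where $H_k=(|E|H_{k-1})^{\gamma}$: exhaustion $E=\bigcup_k E_k$ is then immediate (the threshold at $k=K$ is $|E|$), and membership in $E_k$ but not $E_{k-1}$ hands you both the lower density needed to count the covering balls and the upper density needed to split them into $O(|E|^{1/K})$ separated families. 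Note finally that your scales $H_k=M^{\gamma^{k-1}}$ give $H_{k+1}\le (MH_k)^{\gamma}$, which is the wrong direction for the density increment to be visible at scale $H_{k+1}$; the recursion should be $H_{k+1}\sim(MH_k)^{\gamma}$, as in the paper's construction.
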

\begin{proof}
The sets $E_k$ are constructed as follows.
Let $E_0 = \emptyset$ and $H_0=1$. 
Start with $k=1$ and proceeding recursively. 
We set
\[
H_{k} = |E|^\gamma H_{k-1}^\gamma,
\] 
and $E_k$ to be the set of all $x \in E \setminus \cup_{j=1,2,\cdots,k-1} E_j$ such that
\[
|E \cap B(x, H_k)| \le |E|^{k/K}.
\]
Then $E = \bigcup_{k=1}^{K} E_k$, and
each $E_k$ has the property described in the statement. For more details, see the proof of Lemma 3.3 in \cite{tao2000Bochner}.
\end{proof}
Roughly the above lemma says that a set $E$ can be considered as the union of sparse balls.  The constant $K$ will be a constant depending on  $\epsilon$ and $\gamma$, and the number of sparse collections, $O(|E|^{1/K})$, causes a loss in integrability.
 
Now we give the proof of Proposition \ref{thm:epsilonRem}.
\begin{proof}[Proof of Proposition \ref{thm:epsilonRem}]
Consider (i).
By interpolation,
it suffices to show that for $1 \le \tilde r < r$,
the restricted type estimate 
\begin{equation} \label{weaktype}
\|\mathfrak R \chi_E \|_{L^2(d\sigma)} \le C_{\tilde r}  R^{n(\frac{1}{\tilde r} - \frac{1}{r})} A(R) \|\chi_E\|_{L^{q}(B_R;L^{\tilde r}(\mathbb R))}
\end{equation}
for all subset $E$ in $\mathbb R \times B_R$. (For details, see \cite{fernandez1977lorentz}*{Proposition 9.2}). 
Since the surface $S$ is compact,  
$\chi_E$ can be replaced with $\chi_E \ast \varphi$ where $\varphi$ is the bump function supported on the cube of sidelength $c \sim 1$ such that $\hat \varphi$ is positive on $S$. Thus we may  further assume that $E$ is  the union of $c$-cubes.

Let $\mathrm{proj}_x E$ be the projection of $E$ onto the $x$-space. For each $x \in c \mathbb Z^n \cap \mathrm{proj}_x E$ we define $E_x$ to be the union of $c$-cubes in $E$ that intersects $\mathbb R \times \{x\}$, and let $E(h)$ be the union of $E_x$ such that $E_x$ contains $O(h)$ number of $c$-cubes.
Then one has a decomposition
\[
E= \bigcup_{h} E(h)
\]
where $h \ge 1$ are dyadic numbers.

Using Lemma \ref{lem:DecE} we decompose $E(h)$ into $
E_k$'s, and apply Lemma \ref{lem:sparseEst} to each $\chi_{E_k}$. Then,
\[
\|\mathfrak R \chi_{E_k} \|_{L^2(d\sigma)} \le C_{\epsilon} |E(h)|^{1/K} (|E(h)|^{\gamma^{k-1}})^{\epsilon} A(R) \|\chi_{E_k} \|_{L_x^q(B_R ; L_t^r(\mathbb R))},
\] 
and so
\[
\|\mathfrak R \chi_{E(h)} \|_{L^2(d\sigma)} \le C_{\epsilon} K|E(h)|^{1/K} (|E(h)|^{\gamma^{K}})^{\epsilon} A(R) \|\chi_{E(h)} \|_{L_x^q(B_R ; L_t^r(\mathbb R))}.
\]  
Since $|E(h)| \sim h |\mathrm{proj\,} E(h)|$, the right side of the above estimate is bounded by
\[
C_{\epsilon} K A(R)  h^{\frac{1}{r} + \delta} |\mathrm{proj}_x E(h)|^{\frac{1}{q} + \delta},
\]
where $\delta =\frac{1}{K} + \epsilon \gamma^{K}$.
Note that if we take $K= C_\gamma^{-1} \log(1/\epsilon)$ for a constant $C_\gamma > \log \gamma$, then one has $\delta =\frac{1}{K} + \epsilon \gamma^{K} \to 0$ as $\epsilon \to 0$.
By the triangle inequality,
\begin{align*}
\|\mathfrak R \chi_E \|_{L^2(d\sigma)} 
&\le C_\delta  A(R) 
\sum_{h} h^{\frac{1}{r} + \delta} |\mathrm{proj}_x E(h)|^{\frac{1}{q} + \delta} \\
&=C_\delta  A(R) \sum_{h} h^{-\delta_1} h^{\frac{1}{r} + \delta + \delta_1} |\mathrm{proj}_x E(h)|^{\frac{1}{q} + \delta}
\end{align*}
for any $\delta_1>0$, where the constant $K$ is absorbed to $C_\delta$.
Let us set $\tilde r$ by
$\frac{1}{\tilde r} = \frac{1}{r} + \delta  + \delta_1$.
Since $\mathrm{proj}_x E(h) \subset B_R$, one has
\begin{align*}
h^{\frac{1}{r} + \delta + \delta_1} |\mathrm{proj}_x E(h)|^{\frac{1}{q} + \delta} 
&\le CR^{n\delta}\|\chi_{E}\|_{L_x^{q}(B_R;L_t^{\tilde r}(\mathbb R))} \\
&\le CR^{n(\frac{1}{\tilde r} - \frac{1}{r})}\|\chi_{E}\|_{L_x^{q}(B_R;L_t^{\tilde r}(\mathbb R))}.
\end{align*} 
Thus we have \eqref{weaktype}.

\smallskip

Consider (ii). As in the proof of (i), it is suffices to show that for $1 \le \tilde r < r$,
\begin{equation} \label{eqn:Glrest2}
\|\mathfrak R \chi_E \|_{L^2(d\sigma)} \le C_{\tilde r} A(R) R^{n(\frac{1}{\tilde r} - \frac{1}{r}) + \delta_1} \|\chi_E\|_{L_t^{\tilde r}(\mathbb R;L_x^{q}(B_R))}
\end{equation}
for all $\delta_1>0$.
We define $\mathrm{proj}_t E$ to be the projection of $E$ onto the $t$-space, and for each $t \in c \mathbb Z \cap \mathrm{proj}_t E$ we define $E_t$ to be the union of $c$-cubes in $E$ that intersects $\{t\} \times B_R$. If $E(h)$ is the union of $E_t$ such that $E_t$ contains $O(h)$ number of $c$-cubes. Following the arguments of (i) we can get
\[
\|\mathfrak R \chi_{E(h)} \|_{L^2(d\sigma)} \le C_\delta A(R) h^{\frac{1}{q} + \delta} |\mathrm{proj}_t\, E(h)|^{\frac{1}{r} + \delta}.
\]
By the triangle inequality,
\[
\|\mathfrak R \chi_{E} \|_{L^2(d\sigma)} 
\le C_\delta A(R) \sum_{h} h^{\frac{1}{q} + \delta} |\mathrm{proj}_t\, E(h)|^{\frac{1}{r} + \delta}. 
\]
We set $\tilde r$ by $\frac{1}{\tilde r} = \frac{1}{r} + \delta $. By using $h \le CR^{n}$ the above equation is 
\begin{align*}
&\le C_{\tilde r} A(R) R^{n(\frac{1}{\tilde r} - \frac{1}{r} + \delta_1)} \sum_{h} h^{-\delta_1}h^{\frac{1}{q}} |\mathrm{proj}_t\, E(h)|^{\frac{1}{\tilde r}} \\
&\le C_{\tilde r,\delta_1} A(R) R^{n(\frac{1}{\tilde r} - \frac{1}{r} + \delta_1)}  \|\chi_E\|_{L_t^{\tilde r}(\mathbb R ; L_x^{q}(B_R))}
\end{align*}
for all $\delta_1>0$.
Thus we have \eqref{eqn:Glrest2}.

\end{proof}


\section*{Acknowledgment}
The author is indebted to the anonymous referee whose comments helped clarify some matters related to the $H^\epsilon$-loss, and led to the introduction of Proposition 3.3. 



\


\begin{bibdiv}
	\begin{biblist}
		
		\bib{ben1992decay}{article}{
			author={Ben-Artzi, Matania},
			author={Klainerman, Sergiu},
			title={Decay and regularity for the {S}chr\"odinger equation},
			date={1992},
			journal={Journal d'Analyse Math{\'e}matique},
			volume={58},
			number={1},
			pages={25\ndash 37},
		}
		
		\bib{Bourgain2016}{article}{
			author={Bourgain, Jean},
			title={A note on the {Schr{\"o}dinger} maximal function},
			date={2016},
			ISSN={1565-8538},
			journal={Journal d'Analyse Math{\'e}matique},
			volume={130},
			number={1},
			pages={393\ndash 396},
			url={https://doi.org/10.1007/s11854-016-0042-8},
		}
		
		\bib{bourgain2013schrodinger}{article}{
			author={Bourgain, Jean},
			title={On the {Schr{\"o}dinger} maximal function in higher dimension},
			date={2013},
			journal={Proceedings of the Steklov Institute of Mathematics},
			volume={280},
			number={1},
			pages={46\ndash 60},
		}
		
		\bib{bourgain1991some}{incollection}{
			author={Bourgain, Jean},
			title={Some new estimates on oscillatory integrals},
			date={Princeton Math. Ser., vol. 42, Princeton University Press, 1995},
			booktitle={Essays on {F}ourier {A}nalysis in {H}onor of {E}lias. {M}.
				{S}tein ({P}rinceton, {NJ}, 1991)},
			pages={83\ndash 112},
		}
		
		\bib{carleson1980some}{incollection}{
			author={Carleson, Lennart},
			title={Some analytic problems related to statistical mechanics},
			date={Lecture Notes in Math. 779},
			booktitle={Euclidean harmonic analysis},
			publisher={Springer},
			pages={5\ndash 45},
		}
		
		\bib{chihara2002smoothing}{article}{
			author={Chihara, Hiroyuki},
			title={Smoothing effects of dispersive pseudodifferential equations},
			date={2002},
			journal={Communications in Partial Differential Equations},
			volume={27},
			pages={1953\ndash 2005},
		}
		
		\bib{cho2018note}{article}{
			author={Cho, Chu-hee},
			author={Ko, Hyerim},
			title={A note on maximal estimates of generalized {Schr\"odinger}
				equation},
			date={2018},
			journal={arXiv preprint arXiv:1809.03246},
		}
		
		\bib{constantin1988local}{article}{
			author={Constantin, Peter},
			author={Saut, Jean-Claude},
			title={Local smoothing properties of dispersive equations},
			date={1988},
			journal={Journal of the American Mathematical Society},
			volume={1},
			number={2},
			pages={413\ndash 439},
		}
		
		\bib{dahlberg1982note}{incollection}{
			author={Dahlberg, Bjorn~E.J.},
			author={Kenig, Carlos~E.},
			title={A note on the almost everywhere behavior of solutions to the
				{Schr{\"o}dinger} equation},
			date={Lecture Notes in Math. 908},
			booktitle={Harmonic analysis},
			publisher={Springer},
			pages={205\ndash 209},
		}
		
		\bib{du2017sharp}{article}{
			author={Du, Xiumin},
			author={Guth, Larry},
			author={Li, Xiaochun},
			title={A sharp {Schr{\"o}dinger} maximal estimate in {$\mathbb R^2$}},
			date={2017},
			journal={Annals of Mathematics},
			volume={186},
			number={2},
			pages={607\ndash 640},
		}
		
		\bib{du2018pointwise}{article}{
			author={Du, Xiumin},
			author={Guth, Larry},
			author={Li, Xiaochun},
			author={Zhang, Ruixiang},
			title={Pointwise convergence of {Schr\" odinger} solutions and
				multilinear refined {Strichartz} estimate},
			date={2018},
			journal={Forum of Mathematics, Sigma},
			volume={6},
			pages={e14. doi:10.1017/fms.2018.11},
		}
		
		\bib{du2018sharp}{article}{
			author={Du, Xiumin},
			author={Zhang, Ruixiang},
			title={Sharp {$L^2$} estimate of {S}chr{\"o}dinger maximal function in
				hinger dimensions},
			date={2018},
			journal={arXiv preprint arXiv:1805.02775},
		}
		
		\bib{fernandez1977lorentz}{article}{
			author={Fernandez, Dicesar~Lass},
			title={Lorentz Spaces with mixed norms},
			date={1977},
			journal={Journal of Functional Analysis},
			volume={25},
			pages={128\ndash 146},
		}

		\bib{kato1983cauchy}{article}{
			author={Kato, Tosio},
			title={On the {C}auchy problem for the (generalized) {K}orteweg-de
				{V}ries equation},
			date={1983},
			journal={Studies in Appl. Math. Adv. in Math. Suppl. Stud.},
			volume={8},
			pages={93\ndash 128},
		}
		
		\bib{kenig1991oscillatory}{article}{
			author={Kenig, Carlos~E},
			author={Ponce, Gustavo},
			author={Vega, Luis},
			title={Oscillatory integrals and regularity of dispersive equations},
			date={1991},
			journal={Indiana University Mathematics Journal},
			volume={40},
			number={1},
			pages={33\ndash 69},
		}
		
		\bib{kenig1983strong}{article}{
			author={Kenig, Carlos~E},
			author={Ruiz, Alberto},
			title={A strong type (2, 2) estimate for a maximal operator associated
				to the {Schr{\"o}dinger} equation},
			date={1983},
			journal={Transactions of the American Mathematical Society},
			volume={280},
			number={1},
			pages={239\ndash 246},
		}
		
		\bib{lee2006pointwise}{article}{
			author={Lee, Sanghyuk},
			title={On pointwise convergence of the solutions to {S}chr{\"o}dinger
				equations in {$\mathbb R^2$}},
			date={2006},
			journal={International Mathematics Research Notices},
			volume={2006},
			pages={1\ndash 21},
		}
		
		\bib{luca2017coherence}{article}{
			author={Luc{\`a}, Renato},
			author={Rogers, Keith~M},
			title={Coherence on fractals versus pointwise convergence for the
				{Schr{\"o}dinger} equation},
			date={2017},
			journal={Communications in Mathematical Physics},
			volume={351},
			number={1},
			pages={341\ndash 359},
		}
		
		\bib{luca_rogers_2017}{article}{
			author={Luc\`a, Renato},
			author={Rogers, Keith~M},
			title={A note on pointwise convergence for the {Schr{\"o}dinger}
				equation},
			date={2017},
			journal={Mathematical Proceedings of the Cambridge Philosophical Society},
			pages={1\ndash 10. doi:10.1017/S0305004117000743},
		}
		
		\bib{moyua1996schrodinger}{article}{
			author={Moyua, Adela},
			author={Vargas, Ana},
			author={Vega, Luis},
			title={Schr{\"o}dinger maximal function and restriction properties of
				the Fourier transform},
			date={1996},
			journal={International Mathematics Research Notices},
			volume={1996},
			number={16},
			pages={793\ndash 815},
		}
		
		\bib{ruzhansky2012smoothing}{article}{
			author={Ruzhansky, Michael},
			author={Sugimoto, Mitsuru},
			title={Smoothing properties of evolution equations via canonical
				transforms and comparison principle},
			date={2012},
			journal={Proceedings of the London Mathematical Society},
			volume={105},
			number={2},
			pages={393\ndash 423},
		}
		
		\bib{sjolin1987regularity}{article}{
			author={Sj\"olin, Per},
			title={Regularity of solutions to the {S}chr{\"o}dinger equation},
			date={1987},
			journal={Duke mathematical journal},
			volume={55},
			number={3},
			pages={699\ndash 715},
		}
		
		\bib{stein1993harmonic}{book}{
			author={Stein, E.M.},
			author={Murphy, T.S.},
			title={Harmonic Analysis: Real-variable Methods, Orthogonality, and Oscillatory Integrals},
			date={Princeton University Press, 1993},
    	}

		\bib{sun2017sharpness}{article}{
			author={Sun, Shu-Ming},
			author={Tr{\'e}lat, Emmanuel},
			author={Zhang, Bing-Yu},
			author={Zhong, Ning},
			title={On sharpness of the local {Kato}-smoothing property for
				dispersive wave equations},
			date={2017},
			journal={Proceedings of the American Mathematical Society},
			volume={145},
			number={2},
			pages={653\ndash 664},
		}

		\bib{tao2000Bochner}{article}{
			author={Tao, T.},
			title={The {Bochner}-{Riesz} conjecture implies the restriction conjecture},
			date={1999},
			journal={Duke Mathematical Journal},
			volume={96},
			number={2},
			pages={363\ndash 375},
		}
				
		\bib{tao2000bilinearII}{article}{
			author={Tao, T.},
			author={Vargas, A.},
			title={A bilinear approach to cone multipliers {II}. {Applications}},
			date={2000},
			journal={Geometric and Functional Analysis},
			volume={10},
			number={1},
			pages={216\ndash 258},
		}
		
		\bib{vega1988schrodinger}{article}{
			author={Vega, Luis},
			title={Schr{\"o}dinger equations: pointwise convergence to the initial
				data},
			date={1988},
			journal={Proceedings of the American Mathematical Society},
			volume={102},
			number={4},
			pages={874\ndash 878},
		}
		
	\end{biblist}
\end{bibdiv}

\end{document}